\def\rr{{\mathbb R}}
\def\rn{{{\rr}^n}}
\def\fz{\infty}
\def\dist{{\mathop\mathrm{\,dist\,}}}
\def\boz{{\Omega}}
\def\diam{{\mathop\mathrm{\,diam\,}}}
\def\r{\right}
\def\lf{\left}
\newtheorem{thm}{Theorem}[section]
\newtheorem{lem}{Lemma}[section]
\newtheorem{cor}{Corollary}[section]
\newtheorem{defn}{Definition}[section]
\numberwithin{equation}{section}
\begin{document}
\arraycolsep=1pt

\title[Local Sobolev-Poincar\'e imbedding domains ]{Local Sobolev-Poincar\'e imbedding domains}

\author{Tian Liang and Zheng Zhu}

\address{Tian Liang\\
School of Mathematics and Statistics\\
Huizhou University\\
Guangdong 516007\\
P. R. China}
\email{\tt liangtian@hzu.edu.cn}

\address{Zheng Zhu\\
School of Mathematical Science\\
Beihang University\\
        Changping District Shahe Higher Education Park South Third Street No. 9\\
        Beijing 102206\\
        P. R. China}
\email{\tt zhzhu@buaa.edu.cn}

\keywords{ Local sobolev-Poincar\'e inequalities, Uniform domains, Cigar domains, LLC}

\thanks{The authors  would like to thank Prof. Yuan Zhou for lots of valuable discussions which helps this paper essentially. The authors also thank Prof. Tapio Rajala and Prof. Qingshan Zhou for pointing out some typos in the first version of manuscript and making the manuscript more readable. The first author is supported by the NSFC grant (No.12201238) and GuangDong Basic and Applied Basic Research Foundation (Grant No. 2022A1515111056). The second author is supported by the NSFC grant (No. 12301111) and the starting grant from Beihang University (ZG216S2329).}

\maketitle

\begin{abstract}
Impressed by the remarkable results about the characterization of (global) Sobolev-Poincar\'e imbedding domains in \cite{bsk96} by Buckley and Koskela. In this article, we give characterizations to local Sobolev-Poincar\'e imbedding domains. The main result reads as below.
\begin{enumerate}
\item For $1\leq p\leq n$, a bounded domain with slice condition is a local Sobolev-Poincar\'e domain of order $p$ if and only if it is a uniform domain. Combine it with results in \cite{bsk96}, there exists a global Sobolev-Poincar\'e imbedding domain of order $p$ which is not a local Sobolev-Poincar\'e imbedding domain of order $p$.
\item For $n<p<\fz$, a bounded domain with slice condition is a local Sobolev-Poincar\'e imbedding domain of order $p$ if and only if it is an $\alpha$-cigar domain for $\alpha=(p-n)/(p-1)$. Combine it with results in \cite{bsk96}, a domain is a local Sobolev-Poincar\'e imbedding domain of order $p$ if and only if it is a (global) Sobolev-Poincar\'e imbedding domain of order $p$.
\end{enumerate}
\end{abstract}

\section{Introduction\label{s1}}
 In this paper, we give geometrical characterizations to bounded domains which support local Sobolev-Poincar\'e imbeddings. In this paper, if we do not explain deliberately, we always assume domains are bounded. First, we review some existing results about domains that support (global) Sobolev-Poincar\'e imbeddings.  If a bounded domain $\boz\subset\rn$ has some sufficiently nice geometrical property, for example the boundary of $\boz$ is smooth, then this domain supports (global) Sobolev-Ppoincar\'e imbeddings. The classical Sobolev-Poincar\'e imbeddings for sufficiently nice bounded domain depend on weather the exponent $p$ is less than or equaivalent to or larger than the dimension $n$. For $1\leq p<n$, for every function $u$ in the Sobolev class $W^{1, p}(\boz)$, we have the following Sobolev-Poincar\'e inequality
\begin{equation}{\label{Eq1}}
\inf\limits_{c\in\rr}\lf(\int_\boz\lf|u(x)-c\r|^{\frac{np}{n-p}}dx\r)^{\frac{n-p}{np}}\leq C\lf(\int_\boz|\nabla u(x)|^pdx\r)^{\frac{1}{p}},
\end{equation}
with a constant $C$ independent of $u$. By making use of the triangle inequality, the inequality is equivalent to the following inequality
\begin{equation}\label{eq:0}
\lf(\int_\boz|u(x)-u_\boz|^{\frac{np}{n-p}}dx\r)^{\frac{n-p}{np}}\leq C\lf(\int_\boz|\nabla u(x)|^pdx\r)^{\frac{1}{p}}
\end{equation}
 with maybe a different constant $C$ which is also independent of $u$, where $u_\boz$ means the integral average of $u$ over the domain $\boz$. This inequality was first proved by Sobolev in \cite{Sobolev1, Sobolev2} for $1<p<n$. For $p=1$, it was proved by Gagliardo \cite{G:1958} and Nirenberg \cite{N:1959} (also see \cite{M:1960} by Maz'ya).

 For $p=n$, for a sufficiently nice domain $\boz\subset\rn$ and every $u\in W^{1, n}(\boz)$, one gets the following Trudinger inequality
 \begin{equation}\label{eq:1}
 \|u-u_\boz\|_{\phi(L)(\boz)}\leq C\lf(\int_\boz|\nabla u(x)|^ndx\r)^{\frac{1}{n}}
 \end{equation}
with a constant $C$ independent of $u$. Please refer to \cite{T:1967} for a proof. Here
 \[\phi(t)=\exp\lf(t^{\frac{n}{n-1}}\r)-1\]
 and $\|\cdot\|_{\phi(L)(\boz)}$ is the corresponding Orlicz norm on $\boz$ defined by
 \[\|f\|_{\phi(L)(\boz)}:=\inf\lf\{s>0:\int_\boz\phi\lf(\frac{|f(x)|}{s}\r)dx\leq 1\r\}.\]
 It is well-known that the inequality (\ref{eq:1}) is equivalent to the following inequality
 \begin{equation}\label{Eq2}
 \inf\limits_{c\in\rr}\|u-c\|_{\phi(L)(\boz)}\leq C\lf(\int_\boz|\nabla u(x)|^ndx\r)^{\frac{1}{n}}
 \end{equation}
 with maybe a different constant $C$ independent of $u$, see \cite{M:book85}.

 For $n<p<\fz$, for certain domains $\boz$ (including all balls) and $u\in W^{1, p}(\boz)$, one gets the following H\"older imbedding inequality (the so-called Morry's inequality)
 \begin{equation}\label{eq:2}
 |u(x_1)-u(x_2)|\leq C|x_1-x_2|^{1-\frac{n}{p}}\lf(\int_\boz|\nabla(x)|^pdx\r)^{\frac{1}{p}}
 \end{equation}
 for almost every $x_1,x_2\in\boz$ and a constant $C$ independent of $u$. 

 As we know, Sobolev-Poincar\'e imbedding inequalities (\ref{eq:0}), (\ref{eq:1}) and (\ref{eq:2}) hold on sufficiently nice domains (like balls). We say a domain is a Sobolev-Poincar\'e imbedding domain of order $p$, if the corresponding Sobolev-Poincar\'e inequality holds on the domain. By making use of the result from \cite{HK:1991}, it is not difficult to show bounded $W^{1, p}$-extension domains for $1\leq p<\fz$ are Sobolev-Poincar\'e imbedding domains of order $p$. Roughly, a domain $\boz\subset\rn$ is called a $W^{1, p}$-extension domain, if every function in $W^{1, p}(\boz)$ can be extended to be a function in $W^{1, p}(\rn)$ whose norm is controlled from above uniformly by the norm of the original function. Please see the precise definition in next section. Then the following natural question arises.
 \begin{center}
 \textbf{ What is the geometric characterizations of Sobolev-Poincar\'e imbedding domains?}
  \end{center}
For $1\leq p<n$, Bojarski proved that John domains are Sobolev-Poincar\'e imbedding domains of order $p$, see \cite{b89}. Also see \cite{HK:1998} for this result. Roughly, a domain is called a John domain, if every point in the domain can be connected to a center point by a twisted cone whose size is comparable to the Euclidean distance from the point to the center point. Please see the precise definition below. Conversely, under some very weak connecting assumption that is the slice condition, Buckley and Koskela \cite{bk95} proved that a Sobolev-Poincar\'e imbedding domain of order $p$ for $1\leq p<n$ must be a John domain. Please see the prercise definitions of the slice property with respect to a point and the slice condition for a domain in next section. By the result in \cite{gm85b} from Gehring and Martio, if a domain is quasiconformally equivalent to the unit ball, it satisfies the slice condition. The Riemann Mapping Theorem implies that every proper simply connected domain in $\rr^2$ satisfies the slice condition. For $p=n$, Smith and Stegenga proved that $0$-carrot domains are Sobolev-Poincar\'e imbedding domains of order $n$. Conversely, Buckley and Koskela proved that if a domain satisfying the slice condition is a Sobolev-Poincar\'e imbedding domain of order $n$, then it must be a $0$-carrot domain. See the precise definition of $\alpha$-carrot domain for $0\leq\alpha<1$ below. By the definition, it is easy to verify that John domains are $0$-carrot domains. For $n<p<\fz$, under the assumption that the domain satisfies the slice condition, Buckley and Koskela \cite{bk95} proved that a domain is a Sobolev-Poincar\'e imbedding domain of order $p$ if and only if it is an $\alpha$-cigar domain for $\alpha=(p-n)/(p-1)$. See the precise definition of cigar domains below. In the paper \cite{bk95}, Buckley and Koskela always used the terminologies that weak carrot domains and weak cigar domains. To simplify the notation, we just say carrot domains and cigar domains here. In \cite{z11}, Zhou generalized these results to the fractional Haj\l{}asz-Sobolev spaces.

 A domain $\boz\subset\rn$ is called a local Sobolev-Poincar\'e imbedding domain of order $p$, if it satisfies a local version of one of the corresponding inequalities (\ref{Eq1}), (\ref{Eq2}) and (\ref{eq:2}). To be more precise, a domain $\boz\subset\rn$ is called a local Sobolev-Poincar\'e imbedding domain of order $p$ for $1\leq p<n$, if for every $ u\in{W}^{1,p}(\Omega)$ and every $x \in\Omega$ and $0<r <\diam(\boz)$, the inequality
\begin{eqnarray}\label{e1.w1}
\inf \limits _{c\in\mathbb{R}}\lf(\int_{B(x, r)\cap\boz}|u(y)-c|^{\frac{np}{n-p}}dy\r)^{\frac{n-p}{np}}\le C\lf(\int_{B(x, \lambda r)\cap\boz}|\nabla u(y)|^pdy\r)^{\frac{1}{p}}
\end{eqnarray}
holds with constants $C, \lambda\geq 1$ independent of $u$ and $B(x, r)$. For $p=n$, if for every $ u\in{W}^{1,n}(\Omega)$ and every $x \in\Omega$ and $0<r < \diam(\boz)$, the inequality
\begin{eqnarray}\label{e1.w2}
\inf \limits_{c\in\mathbb{R}} \int_{B(x,r)\cap\Omega} \exp\left(\frac{A|u(y)-c|}{\|\nabla u\|_{L^{n}(B(x, \lambda r)\cap\Omega)}}\right)^{\frac{n}{n-1}}\,dy
\le C|B(x, r)|
\end{eqnarray}
holds with constants $C, \lambda\geq 1$ and $A>0$ independent of $u$ and $B(x, r)$. For $n<p<\infty $, if for every $ u\in{W}^{1,p}(\Omega)$ and every $x \in\Omega$ and $0<r <\diam(\boz)$, the inequality
\begin{eqnarray}\label{e1.w3}
|u(x_1)- u(x_2)| \leq C|x_1-x_2|^{1-\frac{n}{p}}\lf(\int_{B(x, \lambda r)\cap\boz}|\nabla u(y)|^pdy\r)^{\frac{1}{p}}.
\end{eqnarray}
holds for almost every $x_1, x_2\in B(x, r)\cap\boz$ and constants $C, \lambda\geq 1$ independent of $u$ and $B(x,r)$. As we explained before, the geometric characterization of Sobolev-Poincar\'e imbedding domains is already clear. For local Sobolev-Poincar\'e imbedding domains, a similar natural question arises.
 \begin{center}
\textbf{ What is the geometric characterization of local Sobolev-Poincar\'e imbedding domains?}
 \end{center}
For bounded domains, by taking the radius $r$ to be large enough, one can see that a local Sobolev-Poincar\'e imbedding domain of order $p$ is also a (global) Sobolev-Poincar\'e imbedding domain of order $p$, for every $1\leq p<\fz$. But the converse is not correct for $1\leq p\leq n$. The planar slit disk is a typical countexample. It is the unit disk with one radial segment removed. The slit disk is a typical John domain with slice condition, so it is a Sobolev-Poincar\'e imbedding domain of order $p$ for every $1\leq p\leq n$. See the results in \cite{bsk96}. However, it is not difficult to check that the slit disk is not a local Sobolev-Poincar\'e imbedding domain of order $p$ for any $1\leq p<\fz$. Hence, for every $1\leq p\leq n$, the class of local Sobolev-Poincar\'e imbedding domains of order $p$ must be a proper subclass of (global) Sobolev-Poincar\'e imbedding domains of order $p$. The class of uniform domains is a subclass of John domains. Uniform domains were first introduced by Martio and Sarvas \cite{MS79}. Roughly, a domain is called a uniform domain, if arbitrary two points inside the domain can be connected by a twisted double cone whose size is comparable to the distance between these two points. Please see the precise definition below. In \cite{J81}, Jones proved that uniform domains are $W^{k, p}$-extension domains for arbitrary $k\in\mathbb N$ and $1\leq p\leq\fz$.  In the main result of the paper, we will show a bounded domain with slice condition is a local Sobolev-Poincar\'e imbedding domain of order $p$ for $1\leq p\leq n$ if and only if it is a uniform domain. 
\begin{thm}\label{t1.1}
 If $\boz\subset\rn$ is a bounded domain with the slice condition, then it is a local Sobolev-Poincar\'e imbedding domain of order $p$ with $1\leq p\leq n$ if and only if it is a uniform domain.
\end{thm}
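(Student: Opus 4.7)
The plan is to prove the two implications separately.

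For the sufficiency direction (uniform $\Rightarrow$ local Sobolev-Poincar\'e imbedding of order $p$), I would invoke Jones's extension theorem \cite{J81}, which yields a bounded linear extension operator $E\colon W^{1,p}(\boz) \to W^{1,p}(\rn)$. The key observation is that the Whitney-reflection construction of $E$ is local at every scale: for any $x \in \boz$ and $0 < r < \diam(\boz)$, one has
\[
\int_{B(x,r)} |\nabla(Eu)(y)|^p\,dy \leq C \int_{B(x, \lambda r) \cap \boz} |\nabla u(y)|^p\,dy,
\]
with $C,\lambda$ depending only on the uniformity constants. Indeed, for any Whitney cube $Q \subset \rn\setminus\boz$ intersecting $B(x,r)$, the value of $Eu$ on $Q$ is built by averaging $u$ over a reflected cube $Q^* \subset \boz$ satisfying $\ell(Q^*) \approx \ell(Q) \approx \dist(Q, \pa\boz)$ and $\dist(Q, Q^*) \leq C\ell(Q)$; hence $Q^* \subset B(x, \lambda r)\cap\boz$ for suitable $\lambda$. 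After this, one applies the classical Sobolev-Poincar\'e inequality (for $1\leq p<n$) or the Trudinger inequality (for $p=n$) on the Euclidean ball $B(x,r)$ to $Eu$, then restricts the left-hand side to $B(x, r)\cap\boz$ (on which $Eu = u$) to recover (\ref{e1.w1}) or (\ref{e1.w2}).

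For the necessity direction (slice condition $+$ local Sobolev-Poincar\'e imbedding $\Rightarrow$ uniform), the strategy is to bootstrap the global-to-John argument of Buckley and Koskela \cite{bk95} to every scale. Given any pair $y_1, y_2 \in \boz$, choose $x \in \boz$ roughly midway between them and radius $r \approx |y_1 - y_2|$ so that $y_1, y_2 \in B(x, r)\cap\boz$. Since (\ref{e1.w1}) and (\ref{e1.w2}) hold on the ball $B(x,r)$ with constants $C,\lambda$ independent of $x$ and $r$, and since the slice condition on $\boz$ induces a slice-type structure on $B(x, r)\cap\boz$ at the same scale, one can run the necessity argument of \cite{bk95} inside $B(x, r)\cap\boz$. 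That argument, via test functions concentrated on a chain of slices separating $y_1$ from $y_2$, produces a John-type carrot curve from $x$ to each of $y_1, y_2$ lying in $B(x, \lambda r)\cap\boz$, with carrot parameters controlled only by the local Sobolev-Poincar\'e constants. Concatenating the two carrots yields a double-cone curve $\gamma$ joining $y_1$ and $y_2$ whose length is $\ls |y_1 - y_2|$ and along which $\min\{|y - y_1|, |y - y_2|\} \ls \dist(y, \pa\boz)$ for every $y\in\gamma$. This is exactly the uniform-domain condition.

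The sufficiency part should be essentially routine once the locality of the Jones extension is verified. The main obstacle will be the necessity direction. Two issues deserve care. First, one must verify that the slice decomposition of $\boz$ restricts to valid slice data on each subdomain $B(x, r)\cap\boz$ in a scale-invariant way, so that the Buckley-Koskela test functions built from these slices are admissible for the local inequalities on $B(x,r)$. Second, one must track constants through the argument so that the John (carrot) parameters produced for $B(x, r)\cap\boz$ are genuinely independent of $r$; this scale invariance is what permits concatenation of the two carrots into a uniform curve and is the whole reason the local (as opposed to the merely global) imbedding suffices to promote John to uniform.
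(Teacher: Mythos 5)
Your sufficiency argument (uniform $\Rightarrow$ local Sobolev-Poincar\'e) takes a genuinely different route from the paper. The paper constructs, for each $x_0\in\boz$ and $0<r<\diam\boz$, an explicit John subdomain $\boz_r$ with $B(x_0,r)\cap\boz\subset\boz_r\subset B(x_0,\lambda r)\cap\boz$ (its Lemma~\ref{l2.9}, a nontrivial geometric construction that is the technical heart of this direction), and then simply applies Bojarski's global Sobolev-Poincar\'e inequality on John domains (for $1\le p<n$) or the Smith--Stegenga Trudinger inequality on $0$-carrot domains (for $p=n$) to $\boz_r$. You instead propose to use a local version of Jones's Whitney-reflection extension and then the classical inequality on Euclidean balls. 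This can work, but the locality estimate for $Eu$ is itself a lemma you assert rather than prove. The needed observation --- that any Whitney cube $Q\subset\rn\setminus\boz$ meeting $B(x_0,r)$ for $x_0\in\boz$ has $\dist(Q,\pa\boz)<r$, hence $\ell(Q)\lesssim r$ and $Q^*\subset B(x_0,Cr)\cap\boz$ --- is true, but one must also track the chains of reflected cubes that Jones uses to control differences of local averages, and confirm they too remain inside $B(x_0,\lambda r)\cap\boz$. The paper's subdomain lemma achieves the same outcome without this bookkeeping, at the cost of a delicate geometric construction.

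The necessity direction has a genuine gap. Your plan is to run the Buckley--Koskela slice-based global-to-John argument locally inside each $B(x,r)\cap\boz$ and then concatenate two carrots into a uniform double cone. Two things go wrong. First, the slice condition is a global structure: the collection of slices $\{U_i\}$ separating $x$ from $y_1$ is in no way confined to $B(x,Cr)$, so the test functions one builds on those slices are not automatically compatible with the local inequality on $B(x,r)$. Your claim that ``the slice condition on $\boz$ induces a slice-type structure on $B(x,r)\cap\boz$ at the same scale'' is precisely the step that is not obvious and not justified. Second, even granting local carrot curves, you assert $l(\gamma)\lesssim|y_1-y_2|$, which does not follow from the carrot (John) condition alone: a John curve from $y_1$ to a nearby center can spiral and have length unrelated to $|y_1-x|$. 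So quasiconvexity must be established separately. The paper's proof sidesteps both issues by invoking the Buckley--Herron criterion that a domain is uniform if and only if it is quasiconvex, $LLC$, and satisfies the slice condition. Quasiconvexity is then obtained from a chain of implications: local SP $\Rightarrow$ intrinsic Ahlfors $n$-regularity $\Rightarrow$ $p$-Poincar\'e domain $\Rightarrow$ $W^{1,\tilde p}$-extension domain (via the result of Garc\'ia-Bravo--Ikonen--Zhu) $\Rightarrow$ $\tilde p$-quasiextremal distance domain $\Rightarrow$ quasiconvex (Koskela). The $LLC(2)$ property is proved directly by plugging an explicit logarithmic cutoff, concentrated on an annulus around a putative separating ball $B(x_0,br)$, into the local Sobolev-Poincar\'e inequality, which forces a uniform lower bound on $b$. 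This avoids any attempt to localize the slice decomposition.
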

A result in \cite{BD:2007} by Buckley and Herron tells us that uniform domains are $(LLC)$ and if a $(LLC)$ domain is also quasiconvex and satisfies the slice condition, then it is also uniform. Here $LLC$ is the abbreviation of ``locally linearly connection". Please see the precise definition in next section. 
As we discussed before, for $1\leq p\leq n$, the class of local Sobolev-Poincar\'e imbedding domains of order $p$ is a proper subclass of (global) Sobolev-Poincar\'e imbedding domains of order $p$. However, the next theorem says they are the same for $n<p<\fz$.
\begin{thm}\label{th:2}
Let $\boz\subset\rn$ be a bounded domain with the slice condition. Then $\boz$ is a local Sobolev-Poincar\'e imbedding domain of order $p$ for $n<p<\fz$ if and only if it is an $\alpha$-cigar domain with $\alpha=(p-n)/(p-1)$.
 \end{thm}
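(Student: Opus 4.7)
The proof has two implications; the forward direction reduces quickly to the Buckley-Koskela characterization of global SPI, while the reverse direction is the main work and requires localizing the classical cigar-chain argument. For necessity, the local inequality \eqref{e1.w3} reduces to the global Morrey inequality \eqref{eq:2} on $\boz$ once $r$ is taken larger than $\diam(\boz)/\lz$ (possible since $\boz$ is bounded and $r$ ranges up to $\diam(\boz)$), so that $B(x,\lz r)\cap\boz=\boz$. With the slice condition in force, the global Buckley-Koskela result \cite{bk95} for $n<p<\fz$ then forces $\boz$ to be an $\az$-cigar domain with $\az=(p-n)/(p-1)$.

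For sufficiency, assume $\boz$ is $\az$-cigar with $\az=(p-n)/(p-1)$. Fix $x\in\boz$, $0<r<\diam(\boz)$, $u\in W^{1,p}(\boz)$, and two Lebesgue points $x_1,x_2\in B(x,r)\cap\boz$. By the cigar condition, select a rectifiable curve $\gz\subset\boz$ joining $x_1$ and $x_2$ with $\int_\gz\dist(z,\pa\boz)^{\az-1}\,ds(z)\leq C|x_1-x_2|^\az$, and build a Whitney-type chain of balls $\{B_i\}$ along $\gz$ with radii $r_i\simeq\dist(B_i,\pa\boz)$, finite overlap, and substantial intersection of consecutive balls. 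The telescoping estimate $|u(x_1)-u(x_2)|\leq\sum_i|u_{B_i}-u_{B_{i+1}}|$, combined with the Poincar\'e inequality on each $B_i$ and H\"older's inequality (using the relation $\az(p-1)=p-n$), produces
\[
|u(x_1)-u(x_2)|\leq C|x_1-x_2|^{1-n/p}\lf(\int_{\bigcup_i B_i}|\nabla u(y)|^p\,dy\r)^{1/p},
\]
with the cigar bound absorbing the weight factor.

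The key obstacle, and the step genuinely new compared to the global theorem, is to verify that the chain satisfies $\bigcup_i B_i\subset B(x,\lz r)\cap\boz$ for some $\lz\geq 1$ depending only on the cigar and slice constants of $\boz$. Since $|x_1-x_2|\leq 2r$ and the Whitney radii are controlled by the distance to $\pa\boz$, the real content is confining the curve $\gz$ itself to a ball of radius $\lesssim r$ around $x$. The cigar integral inequality alone is not enough: it merely forces $\gz$ to stay within distance $\lesssim|x_1-x_2|^\az\diam(\boz)^{1-\az}$ of the segment $[x_1,x_2]$, which need not be $O(r)$ when $|x_1-x_2|\ll r$. The slice condition supplies the missing geometric control, letting one select a cigar curve respecting the local slice decomposition of $\boz$ near $B(x,r)\cap\boz$ and keeping it inside $B(x,\lz r)\cap\boz$. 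An equivalent viewpoint is to show that the subdomain $B(x,\lz r)\cap\boz$ inherits an $\az$-cigar structure with uniform constants, after which the global Buckley-Koskela result \cite{bk95} applied to this subdomain yields exactly the local Morrey estimate \eqref{e1.w3}.
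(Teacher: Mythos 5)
Your necessity argument is essentially the paper's: apply \eqref{e1.w3} with a ball $B(x,r)$ covering the given pair $x_1,x_2$, enlarge the domain of integration on the right from $B(x,\lambda r)\cap\Omega$ to $\Omega$ to obtain the global Morrey inequality \eqref{eq:2}, and invoke the Buckley--Koskela characterization \cite{bsk96}; the slice condition enters the theorem only here. (Your variant of forcing $B(x,\lambda r)\cap\Omega=\Omega$ by taking $r>\diam(\Omega)/\lambda$ is unnecessary and runs into edge cases when $|x_1-x_2|$ is close to $\diam\Omega$; simply enlarging the integral monotonically does the job.)

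For sufficiency you correctly isolate the obstacle --- confining the cigar curve to $B(x,\lambda r)\cap\Omega$ --- but your proposed fix is misplaced and leaves a genuine gap. The crude bound you extract from the cigar integral, namely $l(\gamma)\leq C\,|x_1-x_2|^\alpha\diam(\Omega)^{1-\alpha}$ from $\dist(z,\partial\Omega)^{\alpha-1}\geq\diam(\Omega)^{\alpha-1}$, is indeed not strong enough, but the remedy is not the slice condition. The paper appeals to \cite[Lemma~2.2]{bsk96}: for $\alpha>0$, a (weak) $\alpha$-cigar domain automatically admits, for each pair $x_1,x_2$, a cigar curve with the additional quasiconvexity-type bound $l(\gamma)\leq C|x_1-x_2|$, with $C$ depending only on the cigar data. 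Since $|x_1-x_2|<2r$, this immediately confines $\gamma$ and the Whitney chain along it to $B(x,\lambda r)\cap\Omega$ for $\lambda$ depending only on the cigar constant; the paper phrases the conclusion as: the connected component of $B(x,\lambda r)\cap\Omega$ containing $B(x,r)\cap\Omega$ is itself an $\alpha$-cigar domain with a uniform constant, to which the global Morrey estimate on cigar domains then applies. In short, the slice condition plays no role whatsoever in the sufficiency direction; the ingredient missing from your sketch is the self-improvement lemma \cite[Lemma~2.2]{bsk96}, and your proposal to select cigar curves respecting a local slice decomposition is both unnecessary and, as written, too vague to actually produce the needed confinement.
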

 
 As we discussed before, bounded planar simply connected domains satisfy the slice condition. Hence, if we only consider planar bounded simply connected domains, we can obtain the following corollary.
\begin{cor}
If $\boz\subset\rr^2$ is a bounded simply connected domain, then
\begin{itemize}
\item for $1\leq p\leq n$, it is a local Sobolev-Poincar\'e imbedding domain of order $p$ if and only if it is a uniform domain;

\item for $n<p<\fz$, it is a local Sobolev-Poincar\'e imbedding domain of order $p$ if and only if it is an $\alpha$-cigar domain with $\alpha=(p-n)/(p-1)$.
\end{itemize} 
\end{cor}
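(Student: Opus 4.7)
The plan is to reduce this corollary directly to Theorem \ref{t1.1} and Theorem \ref{th:2}. The only ingredient needed beyond those theorems is the fact that every bounded simply connected planar domain satisfies the slice condition, a point already flagged in the introduction.

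First, I would invoke the Riemann Mapping Theorem to produce, for any bounded simply connected domain $\boz\subset\rr^2$ (which is necessarily a proper subdomain of $\rr^2$), a conformal homeomorphism from the open unit disk $\mathbb{D}\subset\rr^2$ onto $\boz$. Since conformal maps are $1$-quasiconformal, $\boz$ is quasiconformally equivalent to the unit ball of $\rr^2$. Then I would cite the result of Gehring-Martio \cite{gm85b} recalled in the introduction: quasiconformal equivalence to the unit ball implies the slice condition. Hence $\boz$ satisfies the slice condition.

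With the slice condition in hand, the two bullets are immediate applications. For $1\leq p\leq n=2$, Theorem \ref{t1.1} gives that $\boz$ is a local Sobolev-Poincar\'e imbedding domain of order $p$ if and only if $\boz$ is a uniform domain. For $n=2<p<\fz$, Theorem \ref{th:2} gives that $\boz$ is a local Sobolev-Poincar\'e imbedding domain of order $p$ if and only if $\boz$ is an $\az$-cigar domain with $\az=(p-n)/(p-1)=(p-2)/(p-1)$.

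The only nonroutine point is checking that the slice condition truly passes from the unit disk to $\boz$ via the conformal map; but since the excerpt explicitly states that the Riemann Mapping Theorem combined with \cite{gm85b} already yields this for every proper simply connected planar domain, no additional work is required. I do not foresee a genuine obstacle: the corollary is, in essence, a specialization of Theorems \ref{t1.1} and \ref{th:2} to $n=2$, using that in the planar simply connected setting the slice hypothesis is automatic.
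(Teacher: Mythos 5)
Your proposal is correct and matches the paper's (implicit) argument exactly: the corollary is obtained from Theorems \ref{t1.1} and \ref{th:2} together with the observation, already made in the introduction via the Riemann Mapping Theorem and the Gehring--Martio result in \cite{gm85b}, that bounded simply connected planar domains satisfy the slice condition.
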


 The paper is arranged as following. In section $2$, we will give definitions, recall some known results and prove some lemmas which will be used later. In section 3, we will prove Theorem \ref{t1.1} and Theorem \ref{th:2}.

\section{Preliminary}
We write $C$ as generic positive constants. These constants may change even in a single string of estimates. The dependence of the constant on a parameters $\alpha, \beta,\cdots$ is expressed by the notation $C = C(\alpha, \beta, \cdots)$ if needed. For two points $x, y\in\rn$, $|x-y|$ means the Euclidean distance between them. For a Lebesgue measurable set $E\subset\rn$, $|E|$ means the Lebesgue measure of it. For a measurable function $u$ defined on $E$ with $0<|E|<\fz$, we define
\[u_E:=\fint_Eu(x)dx:=\frac{1}{|E|}\int_Eu(x)dx\]
to be the integral average of $u$ over the set $E$. A continuous mapping $\gamma:[a, b]\to\boz\subset\rn$ is called a curve. Recall that the length of a curve $\gamma:[a, b]\to\boz$ is the total variation over $[a, b]$
\[l(\gamma):=\sup\lf\{\sum_{i=0}^{n-1}\lf|\gamma(t_i)-\gamma(t_{i+1})\r|\r\}\]
where the supremum is taken over all finite partitions 
\[a=t_0<t_1<\cdots<t_n=b\]
of the interval $[a, b]$. If $l(\gamma)<\fz$, the curve $\gamma$ is called rectifiable. Recall that every rectifiable curve $\gamma$ admits a parametrization by  the arc-length that $\gamma_0:[0, l(\gamma)]\to\boz$. For the arc-length parametrization $\gamma_0$ of $\gamma$, we have 
\[l\lf(\gamma_o\big|_{[t_1, t_2]}\r)=t_2-t_1\]
for all $0\leq t_1\leq t_2\leq l(\gamma)$. The integral of a Borel function $\rho$ over a rectifiable curve $\gamma$ is usually defined via the arc-length parametrization $\gamma_0$ of $\gamma$ by setting
\[\int_\gamma\rho(z)|d(z)|:=\int_{[0, l(\gamma)]}\rho\circ\gamma_0(t)dt. \]
 From now on, we only consider curves that are arc-length parametrized.  For a curve $\gamma\subset\boz$ and $x, y\in\boz$, we use the notation $\gamma^{[x, y]}$ to denote the subcurve of $\gamma$ between $x$ and $y$. A domain $\boz\subset\rn$ is called quasiconvex, if there exists a large enough constant $C>1$ such that for every $x, y\in\boz$, there exists a rectifiable curve $\gamma\subset\boz$ joining $x$ and $y$ such that 
 \[l(\gamma)\leq C|x-y|.\]

Let us give some definitions next. First, let us recall the definition of John domains.
\begin{defn}\label{de:john}
A domain $\boz\subset\rn$ is called a $c_0$-John domain for some constant $0<c_0\leq 1$, if it is bounded and there exists a distinguished center point $x_0\in\boz$ such that for every $x\in\boz$, there exists a curve $\gamma:[0,l]\to\boz$ parametrized by arclength such that $\gamma(0)=x$, $\gamma(l)=x_0$ such that for every $t\in [0, l]$, we have
\[\dist(\gamma(t), \partial\boz)\geq c_0t.\]
\end{defn}
Roughly, every point inside a John domain can be connected to a center by a twisted cone whose size is comparable to the Euclidean distance between $x$ and $x_0$. Next we give the definition of uniform domains.
\begin{defn}\label{de:uniform}
A domain $\Omega \subset \rn $ is called $\epsilon_{0}$-uniform for $0<\epsilon_0\leq1$, if for all $x,y \in \Omega$,
there exists a rectifiable curve $\gamma\subset\Omega$ joining $x,y$ with
\begin{equation}
 l(\gamma) \leq \frac{1}{\epsilon_{0}}|x-y|,\nonumber
 \end{equation}
 and for every $z\in\gamma$, we have
 \begin{equation}
 \dist(z,\partial  \Omega) \geq \epsilon_{0}\frac{|x-z||y-z|}{|x-y|}.\nonumber
\end{equation}
\end{defn}
Roughly, every pair of points inside the uniform domain can be connected by a twisted double cone whose size is comparable to the Euclidean distance between $x$ and $y$. By the definition, it is easy to see that every bounded uniform domain is also a John domain. The planar slit disk is a typical John domain which is not a uniform domain. Indeed, under the assumption with slice condition, to prove a local Sobolev-Poincar\'e imbedding domain of order $p$ for $1\leq p\leq n$ is also a uniform domain, we will prove that it is quasiconvex and satisfies the following locally linear connection (for short $LLC$) condition. 
\begin{defn}\label{de:LLC}
We say a domain $\Omega \subset \rn$ satisfies locally linear connection (for short $LLC$) condition if there exists a constant $b \in (0,1]$ such that for every $ z \in \rn $ and $ r>0 $, we have
\begin{itemize}
\item $LLC(1)$: Points in $\Omega \cap B(z, r)$ can be joined by a locally rectifiable curve in $\Omega \cap B(z , \frac{r}{b})$;
\item $LLC(2)$: Points in $\Omega \setminus B(z, r)$ can be joined by a locally rectifiable curve in $\Omega \setminus B(z , br)$.
 \end{itemize}
\end{defn}
A homeomorphism $h:\boz\subset\rn\to\boz'\subset\rn$ in the class $W^{1, n}_{\rm loc}(\boz, \boz')$ is called a quasiconformal mapping, if there exists a constant $K>1$ such that for almost every $z\in\boz$, we have
\[|Dh(z)|^n\leq K|J_h(z)|,\]
where $|Dh(z)|$ means the operator norm of the differential matrix $Dh(z)$ and $|J_h(z)|$ is the absolutely value of the Jacobian determinate. A domain $\boz$ is called quasiconformally equivalent to another domain $\boz'$, if there exists a quasiconformal mapping from $\boz$ onto $\boz'$. According to the results in \cite{gm85b} by Gehring and Martio, a domain quasiconformally equivalent to a uniform domain is $LLC$; conversely, if a $LLC$ domain is quasiconvex and satisfies the following slice condition, it is also a uniform domain. Now, let us give the definition of the slice condition.
\begin{defn}\label{de:slice}
Let $\boz\subset\rn$ be a domain with a distinguished point $x_0$ and $C>1$ be a constant. We say that $\boz$ has the $C$-slice property with respect to $x_0$, if for every $x\in\boz$, there is a curve $\gamma:[0, 1]\to\boz$ with $\gamma(0)=x_0$  and $\gamma(1)=x$, and there is a pairwise disjoint collection of open subsets $\{U_i\subset\boz:i=0,\cdots, j\}$ such that:
\begin{enumerate}
\item $x_0\in U_0$, $x\in U_j$, and $x_0$ and $x$ are in different components of $\boz\setminus\overline{U_i}$ for all $0<i<j$;

\item If $\tilde\gamma\subset\subset\boz$ is a curve containing both $x$ and $x_0$, then for all $0<i<j$, we have
\[\diam(U_i)\leq Cl(\tilde\gamma\cap U_i);\]

\item For every $t\in [0, 1]$, we have
\[B\lf(\gamma(t), \frac{\dist(\gamma(t), \partial\boz)}{C}\r)\subset\bigcup_{i=0}^j\overline{U_i};\]

\item For every $0\leq i\leq j$, we have
\[\diam(U_i)\leq C\dist(\gamma(t), \partial\boz)\]
for all $\gamma(t)\in\gamma\cap U_i.$ Also there exists $x_i\in\gamma\cap U_i$ as $x_0$ is previously defined and $x_j=x$ such that
\[B\lf(x_i, \frac{\dist(x_i, \partial\boz)}{C}\r)\subset U_i.\]
\end{enumerate}
We say that $\boz$ satisfies the slice condition if it has the $C$-slice property with respect to every point $x\in\boz$ for some $C>1$.
\end{defn}
There are also other kinds of slice conditions, please see the article \cite{B:2003} for an introduction. The following lemma was proved by Buckley and Herron in \cite{BD:2007}.
\begin{lem}\label{le:unisli}
A domain $\boz\subset\rn$ is a uniform domain if and only if it is quasiconvex, $LLC$ and satisfies the slice condition.
\end{lem}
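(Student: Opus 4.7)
The plan is to prove the two directions separately. The forward direction (uniform implies quasiconvex, LLC, and slice) is fairly direct from Definition~\ref{de:uniform}, while the converse requires constructing the uniform curve from the three hypotheses and is the substantial part of the argument.

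For the forward implication, suppose $\boz$ is $\epsilon_0$-uniform. Quasiconvexity is immediate, since the uniform curve $\gamma$ between any $x,y \in \boz$ satisfies $l(\gamma) \leq |x-y|/\epsilon_0$. For LLC(1), given $x, y \in \boz \cap B(z, r)$, the uniform curve between them has length at most $2r/\epsilon_0$, hence stays inside $B(z, (1+2/\epsilon_0)r)$. For LLC(2), given $x, y \in \boz \setminus B(z, r)$, I would argue that the uniform curve cannot enter $B(z, br)$ for $b = b(\epsilon_0) > 0$ sufficiently small: if a point $w$ on the curve were very close to $z$, the cigar estimate combined with the lower bounds $|x-w|, |y-w| \geq r - |z-w|$ would force $\dist(w, \partial \boz)$ to be large, and one uses a short rerouting to produce a curve avoiding $B(z, br)$. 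For the slice property, fix $x_0 \in \boz$ and take the uniform curve $\gamma$ from $x_0$ to any $x$; the $U_i$ are then constructed as Whitney-type open sets along $\gamma$, with diameters comparable to $\dist(\gamma(t_i), \partial \boz)$ at chosen points $\gamma(t_i)$. The requirements of Definition~\ref{de:slice} follow from the cigar estimate and the length bound on $\gamma$.

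For the reverse direction, suppose $\boz$ is quasiconvex, LLC, and satisfies the slice condition. Fix $x, y \in \boz$. By quasiconvexity there is a curve $\sigma$ joining them with $l(\sigma) \leq C|x-y|$, but $\sigma$ need not satisfy any cigar estimate. The strategy is to modify $\sigma$ on a slice scale: apply the slice condition along $\sigma$ to produce a chain $\{U_i\}_{i=0}^j$ with companion centers $x_i$ satisfying $\diam(U_i) \leq C\dist(x_i, \partial \boz)$, and then use LLC(1) inside each $U_i$ to replace $\sigma \cap U_i$ by a shorter piece staying close to $x_i$. Concatenating these pieces produces a rectifiable curve $\wz\gamma$ from $x$ to $y$ whose total length is comparable to $|x-y|$ and along which the distance to $\partial \boz$ at a point $z \in U_i$ is $\gs \diam(U_i)$. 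The cigar estimate $\dist(z, \partial \boz) \geq \epsilon_0 |x-z||y-z|/|x-y|$ will then follow from bounding $|x-z|$ and $|y-z|$ by the diameters of the slice subchains $U_0, \ldots, U_i$ and $U_i, \ldots, U_j$ respectively.

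The main difficulty will be passing from the asymmetric John-type bound $\diam(U_i) \ls \dist(x_i, \partial \boz)$ given by the slice condition to the symmetric cigar estimate defining uniformity. The slice condition alone controls only one side of the double cone; the role of LLC(2) is precisely to rule out backtracking in the middle of the slice chain, ensuring that $\min(|x-z|, |y-z|)$ is bounded below by a constant multiple of $\diam(U_i)$ simultaneously with $\dist(z, \partial \boz)$. I would split the chain into the regimes "near $x$", "near $y$", and "middle", applying LLC(2) only in the middle regime and relying on the John-type estimate together with the triangle inequality in the near regimes.
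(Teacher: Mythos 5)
The paper does not prove this lemma; it is quoted as a known result of Buckley and Herron \cite{BD:2007}, so there is no in-paper argument to compare against. Your forward direction (uniform $\Rightarrow$ quasiconvex, $LLC$, slice) is essentially correct and standard: quasiconvexity and $LLC(1)$ fall directly out of Definition~\ref{de:uniform}, and your $LLC(2)$ reasoning (a cigar curve meeting $B(z,br)$ must have distance to $\partial\boz$ comparable to $r$, forcing $B(z,br)\subset\boz$ so that one may reroute along the sphere $\partial B(z,br)$) is the usual one, although the Whitney-type construction you invoke for the slice property is only asserted, not built.

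The reverse direction, however, is a plan rather than a proof, and the missing step is exactly the substance of the lemma. You correctly observe that the slice property supplies only the one-sided bound $\dist(z,\partial\boz)\gs\diam(U_i)$ for $z\in\gamma\cap U_i$, while uniformity requires the symmetric cigar estimate $\dist(z,\partial\boz)\gs |x-z|\,|y-z|/|x-y|$, and you propose a near-$x$/near-$y$/middle split with $LLC(2)$ used in the middle. But nowhere is the crucial lower bound $\diam(U_i)\gs\min(|x-z|,|y-z|)$ actually derived from the hypotheses; $LLC(2)$ is invoked only verbally to ``rule out backtracking'' without any quantitative estimate showing that the slice pieces near $x$ cannot shrink faster than $|x-z|$. (This is precisely what fails on a John-but-not-uniform domain such as the slit disk, which is quasiconvex and satisfies the slice condition, so $LLC(2)$ must enter the argument in an essential, estimable way, not merely as a topological non-backtracking statement.) In effect you have reduced the lemma to a key sub-claim and then assumed the sub-claim. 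The honest move here is the one the paper makes: cite \cite{BD:2007}, where this upgrading of the asymmetric carrot bound to a symmetric cigar bound is carried out.
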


Let us give the definition of cigar domains and carrot domains. We follow the definition given by Buckley and Koskela in \cite{bsk96} where they called them weak cigar domains and weak carrot domains. To simplify the notation, we just say cigar domains and carrot domains. First, let us give the definition of cigar domains.
\begin{defn}\label{de:cigar}
Let $0<\alpha<1$ and $0<\beta\leq\alpha$. A domain $\boz\subset\rn$ is called an $(\alpha, \beta)$-cigar domain, if for every pair of points $x, y\in\boz$, there exists a curve $\gamma:[0, 1]\to\boz$ such that $\gamma(0)=x$, $\gamma(1)=y$ and the inequality
\[\int_{\gamma}\dist(z, \partial\boz)^{\alpha-1}|dz|\leq C|x-y|^\beta\]
holds with a constant $C$ independent of $x, y$. A domain $\boz\subset\rn$ is called a $(0, 0)$-cigar domain, if for every pair of points $x, y\in\boz$, there exists a curve $\gamma:[0, 1]\to\boz$ such that $\gamma(0)=x$, $\gamma(1)=y$ and the inequality
\[\int_\gamma\frac{1}{\dist(z, \partial\boz)}|dz|\leq C\log\lf(1+\frac{|x-y|}{\min\{\dist(x, \partial\boz), \dist(y, \partial\boz)\}}\r)\]
holds with a constant $C$ independent of $x, y$.
\end{defn}
To simplify the notation, we use $\alpha$-cigar domain for $(\alpha, \alpha)$-cigar domain. The class of $(0, 0)$-cigar domains coincides with the class of uniform domains, see \cite{GO79, J80}. It was shown in \cite{gm85a} that when $\alpha>0$, the class of $(\alpha, \beta)$-cigar domains is strictly larger than the class of uniform domains. In \cite{L85}, Lappalainen proved that every $\alpha$-H\"older continuous function defined on an $(\alpha, \beta)$-cigar domain can be extended to be a global $\beta$-H\"older continuous function. Now, let us give the definition of carrot domains.
\begin{defn}\label{de:carrot}
Let $0\leq \alpha<1$. A domain $\boz\subset\rn$ is called an $\alpha$-carrot domain, if there exists a distinguished center $x_0\in\boz$ such that for every $x\in\boz$, there exists a curve $\gamma:[0, 1]\to\boz$ such that $\gamma(0)=x$, $\gamma(1)=1$ and the inequality
\begin{equation}\label{eq:carrot}
\int_{\gamma}\dist(z, \partial\boz)^{\alpha-1}|dz|\leq
\begin{cases}
C, &\ \ {\rm if}\ 0<\alpha<1,\\
C\lf(1+\log\lf(\frac{C_1}{\dist(x, \partial\boz)}\r)\r), &\ \ {\rm if}\ \alpha=0,
\end{cases}
\end{equation}
holds for constants $C, C_1$ independent of $x$.
\end{defn}
By the definition, for arbitrary $0\leq\alpha<1$, an $\alpha$-carrot domain must be bounded. It is easy to verify that John domains are $\alpha$-carrot domains for all $0\leq\alpha<1$. The converse is not correct. It is not hard to construct an $\alpha$-carrot domain which is not a John domain, see \cite{gm85a, bsk96}.  The condition in the definition of carrot domains is widely known as quasihyperbolic boundary condition, see \cite{k982} and references therein.

The definitions of uniform domains and John domains show these two classes of domains are closely relative. The next lemma shows that a uniform domain is not far from John domain locally. The case $\diam(\boz)=\fz$ was already considered in \cite[Lemma 1.9]{t86}. It seems their argument is not easy to be extended to the bounded case. So we prove the result for the case $\diam(\boz)<\fz$.
\begin{lem}\label{l2.9}
Suppose that $\Omega$ is an $\epsilon_0$-uniform domain with  $\diam \Omega < \infty $.
Given any $x_0 \in \Omega$ and $r>0$,
there is a $c_0$-John domain $\Omega_r \subset \Omega$ and a constant $\lambda>1$  such that
$$B(x_0,r) \cap \Omega \subset \Omega_r \subset B(x_0, \lambda r) \cap \Omega.$$
Moreover, the constant $c_0$ only depends on the constant $\epsilon_0$.
\end{lem}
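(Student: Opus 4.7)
The idea is to build $\Omega_r$ as a tubular thickening of $\epsilon_0$-uniform curves emanating from a single interior ``center'' $x_0^{\ast}$ to every $y \in B(x_0, r) \cap \boz$, with each tube of width proportional to $\dist(\cdot, \pa \boz)$. Two cases are trivial. If $\dist(x_0, \pa \boz) \geq r$, then $B(x_0, r) \subset \boz$ is convex, so take $\Omega_r := B(x_0, r)$, which is John with $c_0 = 1$. If $\diam \boz \leq C_1 r$ for a constant $C_1 = C_1(\epsilon_0)$ to be fixed, then $\boz \subset B(x_0, (C_1 + 1) r)$ and $\Omega_r := \boz$ is John because it is uniform. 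Hence I assume $\dist(x_0, \pa \boz) < r$ and $\diam \boz > C_1 r$.

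In the main case, I choose $x_0^{\ast}$ on an $\epsilon_0$-uniform curve from $x_0$ to some $p \in \boz$ with $|p - x_0| = C_1 r$, namely the first point of this curve at Euclidean distance $C_1 r / 2$ from $x_0$; the Uniform inequality then yields $\dist(x_0^{\ast}, \pa \boz) \gs r$ once $C_1 = C_1(\epsilon_0)$ is large enough. For each $y \in B(x_0, r) \cap \boz$, let $\alpha_y : [0, l_y] \to \boz$ be an arclength-parameterized quasihyperbolic geodesic from $y$ to $x_0^{\ast}$; in a uniform domain such a geodesic is $\epsilon_0$-uniform with all subarcs uniform, so $l_y \ls r$. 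Set
\[
T_y := \bigcup_{t \in [0, l_y]} B \bigl( \alpha_y(t), \, c_1 \dist(\alpha_y(t), \pa \boz) \bigr),
\]
\[
\Omega_r := B \bigl( x_0^{\ast}, \tfrac{1}{2} \dist(x_0^{\ast}, \pa \boz) \bigr) \cup \bigcup_{y \in B(x_0, r) \cap \boz} T_y,
\]
with $c_1 = c_1(\epsilon_0)$ small. Then $\Omega_r$ is open, $B(x_0, r) \cap \boz \subset \Omega_r$ because $y = \alpha_y(0) \in T_y$, and $\Omega_r \subset B(x_0, \lambda r) \cap \boz$ with $\lambda = \lambda(\epsilon_0)$ (the outer bound follows from $|z - x_0| \leq c_1 \dist(\alpha_y(t), \pa \boz) + l_y + |x_0^{\ast} - x_0| \ls r$).

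For the $c_0$-John property with center $x_0^{\ast}$, given $z \in \Omega_r$ I build the carrot in two pieces: follow $\alpha_y$ (for the $y$ whose tube contains $z$) until entering the fat ball $B(x_0^{\ast}, \tfrac{1}{4} \dist(x_0^{\ast}, \pa \boz))$, then connect to $x_0^{\ast}$ by a straight segment inside the fat ball where the John condition is trivial by convexity. Along the $\alpha_y$-piece, the tube construction yields $\dist(\alpha_y(s), \pa \Omega_r) \geq c_1 \dist(\alpha_y(s), \pa \boz)$, and combining the Uniform estimate $\dist(\alpha_y(s), \pa \boz) \geq \epsilon_0 |\alpha_y(s) - y| |\alpha_y(s) - x_0^{\ast}| / |y - x_0^{\ast}|$ with the no-folding bounds $|\alpha_y(s) - y| \gs s$ and $|\alpha_y(s) - x_0^{\ast}| \gs l_y - s$ (from the subarc-uniformity of quasihyperbolic geodesics) gives $\dist(\alpha_y(s), \pa \Omega_r) \gs \min(s, l_y - s)$, which is the double-John estimate needed to produce the carrot from $z$ to $x_0^{\ast}$ with constant $c_0$ depending only on $\epsilon_0$.

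The main obstacle is converting the Euclidean Uniform bound into an arclength-type John bound; this needs the no-folding estimates $|\alpha_y(s) - y| \gs s$ and $|\alpha_y(s) - x_0^{\ast}| \gs l_y - s$, which are nontrivial consequences of the fact that quasihyperbolic geodesics in a uniform domain are themselves uniform with uniform subarcs. A secondary subtlety is the endpoint behaviour at $x_0^{\ast}$: the fat ball $B(x_0^{\ast}, \tfrac{1}{2} \dist(x_0^{\ast}, \pa \boz))$ is introduced precisely to give the John carrot honest room at the center, and the earlier choice of $x_0^{\ast}$ with $\dist(x_0^{\ast}, \pa \boz) \gs r$ makes this ball large enough to dominate the arclengths $l_y \ls r$.
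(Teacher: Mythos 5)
Your construction follows the same overall strategy as the paper's proof: locate a center $x_0^*$ (the paper's $z_0$) at Euclidean scale $r$ from $x_0$ with $\dist(x_0^*,\pa\boz)\gs r$, join each point of $B(x_0,r)\cap\boz$ to it by a curve, thicken the curves into cigar-shaped tubes, and verify that the union is a John domain with center $x_0^*$ satisfying the required sandwiching. The difference lies in the curves and the tubes. The paper takes the uniform curves $\gamma_x$ furnished directly by the definition of a uniform domain and thickens by the exact cigar radius $\epsilon_0|x-v||z_0-v|/|x-z_0|$ that the double-cone condition guarantees at $v\in\gamma_x$; you instead take quasihyperbolic geodesics $\alpha_y$ and Whitney-type tubes of radius $c_1\dist(\cdot,\pa\boz)$, invoking the Gehring--Osgood theorem that quasihyperbolic geodesics in a uniform domain are uniform curves with uniform subarcs. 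The payoff of your route is that the no-folding estimates $|\alpha_y(s)-y|\gs s$ and $|\alpha_y(s)-x_0^*|\gs l_y-s$, which are precisely what convert the Euclidean double-cone bound into the arclength John bound, come for free from the subarc property; the cost is reliance on a nontrivial external theorem that the paper's proof does not explicitly cite. Your plan is sound; the details you elide are routine and fixable: the carrot from a generic $z\in T_y$ must begin with a short segment from $z$ to the curve point $\alpha_y(t_0)$ whose defining ball contains $z$ (there the John bound is immediate from ball geometry), and the fat ball around $x_0^*$ is exactly what lets you stop the $\alpha_y$-piece while $l_y-s\gs r\gs l_y$ still holds, turning $\min(s,l_y-s)$ into the one-sided bound $\gs s$ that John requires.
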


\begin{proof}
Assume there exists a point $y_0 \in \Omega$  such that $|x_0-y_0|\geq \frac{4r}{\epsilon_{0}}$. Otherwise, let $\lambda = \frac{4}{\epsilon_{0}}$, we have
\[B(x_0, \lambda r) \cap \Omega= \Omega.\]
The result in \cite[Lemma 2.18]{gm85a} says that a bounded uniform domain is always a John domain. Hence, we can just take $\Omega_r =\Omega$ to obtain
$$B(x_0,r) \cap \Omega \subset \Omega_r  \subset B(x_0, \lambda r) \cap \Omega.$$

In the other case, fix $y_0 \in \Omega$ with $|x_0-y_0|= \frac{4r}{\epsilon_{0}}$. There exists a curve $\gamma_0:[0, 1]\to\boz$ with $\gamma_0(0)=x_0$, $\gamma_0(1)=y_0$ and $\gamma_0$ satisfies the conditions in Definition \ref{de:uniform}.
Set
\begin{align*}
G_0:=\left\{z \in \Omega\, :\exists \, v \in \gamma_0 \,such\,\, that \, |z-v| < \frac{ \epsilon_0|x_0-v||y_0-v|}{|x_0-y_0|} \right\}.
\end{align*}
Then $G_0 \subset \Omega$ contains a point $z_0 \in \gamma_0$ with $|x_0-z_0| = \frac{2r}{\epsilon_{0}}$ and $B(z_0,r) \subset G_0 $.
Indeed, the triangle inequality gives
\begin{align*}
\epsilon_0 \frac{|x_0-z_0||y_0-z_0|}{|x_0-y_0|} \geq\epsilon_0 \frac{|x_0-z_0|(|x_0-y_0|-|x_0-z_0|)}{|x_0-y_0|} \geq r.
\end{align*}
Fix $x \in (B(x_0,r)\cap \Omega)\backslash B(z_0, r)$. There exists a curve $\gamma_x:[0, 1]\to\boz$ with $\gamma_x(0)=z_0$ and $\gamma_x(1)=x$ and the curve $\gamma_x$ satisfies the conditions in Definition \ref{de:uniform}. Similar to $G_0$, define a domain $G_x \subset \Omega$ by setting
\begin{align*}
G_x:=\left\{z \in \Omega\,:\,\exists \, v \in \gamma_x\, such\, that\, |v-z| < \frac{ \epsilon_0|x-v||z_0-v|}{|x-z_0|}  \right\}.
\end{align*}
Next we prove that
\begin{align*}
\Omega_r:= \bigcup _{x\in (B(x_0,r)\cap \Omega)\backslash B(z_0, r)} G_x \cup B(z_0, r)
\end{align*}
is a John domain. For every $x\in (B(x_0, r)\cap\boz)\setminus B(z_0, r)$, we have
$$r \leq |x-z_0| \leq |x-x_0|+|x_0-z_0|\leq r\lf(1+\frac{2}{\epsilon_{0}}\r).$$
Hence, for every $z \in G_x$, there exists $v \in \gamma_x$ such that
\begin{align*}
|x_0-z|\leq |x_0-x|+|x-v|+|v-z|.
\end{align*}
Obviously, $|x_0-x|<r$. Denote $\gamma_x^{[x,v]}$ to be the subcurve of $\gamma_x$ from $x$ to $v$ and $\gamma_x^{[v, z_0]}$ to be the subcurve of $\gamma_x$ from $v$ to $z_0$. Since $\gamma_x$ satisfies the conditions in Definition \ref{de:uniform}, we have
\[|x-v|\leq l\lf(\gamma_x^{[x, v]}\r)\leq l(\gamma_x)\leq\frac{1}{\epsilon_0}|x-z_0|.\]
Since
\[l\lf(\gamma_x^{[x, v]}\r)+l\lf(\gamma_x^{[v, z_0]}\r)=l(\gamma_x),\]
by the definition of the set $G_x$, we have
\[|v-z|<\frac{\epsilon_0|x-v||z_0-v|}{|x-z_0|}\leq\frac{\epsilon_0l\lf(\gamma_x^{[x, v]}\r)l\lf(\gamma_x^{z_0, v}\r)}{|x-z_0|}\leq\frac{\epsilon_0\lf(l(\gamma_x)\r)^2}{4|x-z_0|}\leq\frac{\epsilon_0}{4}|x-z_0|.\]
Combine these inequalities together, we obtain
\[|x_0-z| \leq r + \frac{1}{\epsilon_{0}}|x-z_0| + \frac{\epsilon_{0}}{4}|x-z_0| \leq r\left(1+\frac{(\epsilon_{0}+4)^{2}}{4\epsilon_{0}^{2}}\right).\]
Hence, take 
\[\lambda=1+\frac{(\epsilon_{0}+4)^{2}}{4\epsilon_{0}^{2}},\]
we have
$$\Omega_r \subset B\left(x_0,r\left(1+\frac{(\epsilon_{0}+4)^{2}}{4\epsilon_{0}^{2}}\right)\right)\cap \Omega.$$

Now, let us prove that $\boz_r$ is a John domain with the distinguished center $z_0$. If $z\in B(z_0, r)$, it is not difficult to check that the segment $[z, z_0]$ between $z$ and $z_0$ satisfies the condition in Definition \ref{de:john} with the constant $c=1$. Let us consider the case that $z \in G_x \backslash B(z_0,r)$ for some $x\in(B(x_0, r)\cap\boz)\setminus B(z_0, r)$. By the definition of $G_x$, there exists a point $v \in \gamma_{x} $ such that
$$z \in B\lf(v, \frac{\epsilon_0  |x-v||z_0-v|}{|x-z_0|} \r)\subset G_x.$$
So $[z, v] \subset G_x$ and
$$[z, v]\cup \gamma^{[v, z_0]}_{x}\subset\boz_r$$
is a rectifiable curve from $z$ to $z_0$. There exists a unique $t\in(0, 1)$ such that
\[|\gamma_x(t)-v|=\frac{\epsilon_0|x-v||z_0-v|}{2|x-z_0|}\]
and for every $\tilde t\in(0, t)$, we have
\[|\gamma_x(\tilde t)-v|>\frac{\epsilon_0|x-v||z_0-v|}{2|x-z_0|}.\]
Denote $w:=\gamma_x(t)$. Then
\[\tilde\gamma:=[z, v]\cup[v, w]\cup\gamma_x^{[w, z_0]}\subset\boz_r\]
is also a rectifiable curve from $z$ to $z_0$.

Now, let us check the curve $\tilde\gamma$ satisfies the conditions in Definition \ref{de:john} with a constant $c=\frac{6(\epsilon_0+2)}{\epsilon_0^{3}}$ depending only on $\epsilon_0$. For every $s\in [z,v] \subset \tilde\gamma$, since $G_x\cup B(z_0, r)\subset\boz_r$ and
$$z \in B\lf(v, \frac{\epsilon_0  |x-v||z_0-v|}{|x-z_0|}\r)\subset G_x,$$
we have
\begin{align*}
d(s, \partial\boz_r)&\geq d(s, \partial (G_x \cup B(z_0,r)))\geq d(s, \partial G_x)\\
&\geq\frac{\epsilon_0  |x-v||z_0-v|}{|x-z_0|}-|s-v|\\
&\geq\frac{\epsilon_0  |x-v||z_0-v|}{|x-z_0|}-\left( |z-s|-|v-z|\right)\\
&\geq |z-s|=l(\tilde\gamma^{[z, s]}) \geq  \frac{ \epsilon_0^{3} l(\tilde\gamma^{[z, s]})}{6(\epsilon_0+2)}.
\end{align*}
For every $s\in [v, w]$
we know
\begin{align*}
d(s, \partial\boz_r)&\geq d(s, \partial (G_x \cup B(z_0,r) ))\geq d\left(s, \partial B\lf(v, \frac{\epsilon_0  |x-v||z_0-v|}{|x-z_0|} \right)\right)\\
&\geq \frac{\epsilon_0  |x-v||z_0-v|}{|x-z_0|} -|v-s|
\geq \frac{\epsilon_0  |x-v||z_0-v|}{2|x-z_0|}\\
 &\geq \frac{1}{3}\lf(|z-v|+|v-s|\r)
> \frac{\epsilon_0^{3}l(\tilde\gamma^{[z, s]})}{ 6(\epsilon_0+2)}.
\end{align*}
For every $s\in\tilde\gamma^{[w, z_0]}=  \gamma^{[w, z_0]}_{x} $, we have
$$|s-v|\geq \frac{\epsilon_0  |x-v||z_0-v|}{2|x-z_0|},$$
we have
$$|v-z| \leq 2|s-v| \leq 2l(\tilde\gamma^{[v,s]}).$$
We have
$$l(\tilde\gamma^{[z, s]})\leq l([z,v])+l(\gamma_{x}^{[x, s]})-l\gamma_{x}^{[x, v]} \leq 3l(\gamma_{x}^{[x, s]}).$$
If $|s-z_0|\geq \frac{r}{2}$, then we have
\begin{align*}
d(s, \partial\boz_r)&\geq d(s, \partial (G_x \cup B(z_0,r) )) \geq d(s, \partial G_x)\\
                             & \geq \frac{ \epsilon_0|x-s||z_0-s|}{|x-z_0|}
\geq \frac{ \epsilon_0^{2}|x-s|}{2(\epsilon_0+2)}
\geq \frac{ \epsilon_0^{3} l(\gamma_{x}^{[x, s]})}{6(\epsilon_0+2)}\geq \frac{\epsilon_0^3l(\tilde\gamma^{[z, s]})}{6(\epsilon_0+2)}.
\end{align*}
If $|s-z_0|\leq \frac{r}{2}$, since $B(z_0,r) \subset \Omega$, we have
\begin{align*}
d(s, \partial (G_x \cup B(z_0,r) )) \geq d(s, \partial B(z_0,r))
\geq r- |s-z_0| \geq \frac{r}{2}.
\end{align*}
Combine it with
$$|x-z_0| \leq r\lf(1+\frac{2}{\epsilon_{0}}\r),$$
we have
\begin{align*}
d(s, \partial\boz_r)&\geq d(s, \partial (G_x \cup B(z_0,r) )) \geq \frac{r}{2}\\
&\geq \frac{\epsilon_0|x-z_0|}{2(\epsilon_0+2)}
\geq \frac{\epsilon_0^{2}l(\gamma_{x})}{ 2(\epsilon_0+2)}
\geq \frac{\epsilon_0^{3}l(\tilde\gamma^{[z, s]})}{ 6(\epsilon_0+2)}.
\end{align*}
Combine all the estimates above, we can take $c_0={\epsilon_0^3}/{6(\epsilon_0+2)}$ and obtain the desired result.
\end{proof}

Let $\boz\subset\rn$ be a domain. We define the intrinsic metric $d_\boz$ on $\boz$ by setting
\[d_\boz(x_1,x_2):=\inf\lf\{l(\gamma):\gamma\subset\boz\ {\rm is\ a\ locally\ rectifiable\ curve\ joining}\ x_1\ {\rm and}\ x_2\r\}.\]
If there is no such curve, we simply set 
$$d_\boz(x_1, x_2)=\fz.$$ 
For arbitrary $x\in\boz$ and $r>0$, we define the intrinsic ball $B_\boz(x, r)$ by setting
\[B_\boz(x, r):=\lf\{y\in\boz: d_\boz(x, y)<r\r\}\]
and define the intrinsic boundary $\partial_\boz B_\boz(x, r)$ of the intrinsic ball $B_\boz(x, r)$ by setting
\[\partial_\boz B_\boz(x, r):=\lf\{y\in\boz: d_\boz(x, y)=r\r\}.\]

Obviously, $B_\boz(x, r)$ is a subset of $B(x, r)\cap\boz$. A domain $\boz\subset\rn$ is called Ahlfors $n$-regular, if there exists a constant $0<c<1$ and some $r_o>0$ such that for every $x\in\overline\boz$ and every $0<r<r_o$, we have
\[|B(x, r)\cap\boz|\geq c|B(x, r)|.\]
Similarly, we say that $\boz$ is intrinsic Ahlfors $n$-regular, if there exists a constant $0<c<1$ and some $r_o>0$ such that for every $x\in\overline\boz$ and $0<r<r_o$, we have
\[|B_\boz(x, r)|\geq c|B(x, r)|.\]
It is easy to see that intrinsic Ahlfors $n$-regularity can imply Ahlfors $n$-regularity, since $B_\boz(x, r)$ is a subset of $B(x, r)\cap\boz$.

By the result in \cite{hkt08} due to Haj\l{}asz, Koskela and Tuominen, we can conclude that a local Sobolev-Poincar\'e imbedding domain of order $p$ for $1\leq p<\fz$ is Ahlfors $n$-regular. This observation may not come from their result in the paper but comes from the proof in the paper directly. Nevertheless, we will prove a stronger result below. Of course, the idea behind the argument comes from the paper \cite{hkt08} by Haj\l{}asz, Koskela and Tuominen.

\begin{lem}\label{l2.3}
Let $\boz\subset\rn$ be a local Sobolev-Poincar\'e imbedding domain of order $p$ for $1\leq p<\fz$. Then $\boz$ is intrinsic Ahlfors $n$-regular.
\end{lem}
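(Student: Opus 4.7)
The plan is to establish the stronger conclusion that $|B_\boz(x,r)|\geq c\,r^n$ for every $x\in\boz$ and every $0<r<\diam(\boz)$, with $c$ depending only on $n$, $p$, and the local Sobolev--Poincar\'e constants of $\boz$; the case $x\in\partial\boz$ then follows by approximating with interior points. My approach adapts the truncation/capacity argument of Haj\l{}asz--Koskela--Tuominen \cite{hkt08}, but with test functions built from the intrinsic metric $d_\boz$ rather than the Euclidean one. Concretely, I would fix $x_0\in\boz$ and $0<r<\diam(\boz)$ and work with
\[
u_r(y):=\max\{0,\, r-d_\boz(y,x_0)\},\qquad y\in\boz.
\]
Since $|x-y|\leq d_\boz(x,y)$ and $d_\boz$ coincides with $|\cdot-\cdot|$ on every Euclidean ball $B(y,\dist(y,\partial\boz))\subset\boz$, the function $u_r$ is locally Lipschitz on $\boz$ with $|\nabla u_r|\leq 1$ a.e., and $\nabla u_r\equiv 0$ outside $B_\boz(x_0,r)$. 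In particular $u_r\in W^{1,p}(\boz)$ with $\int_{\boz}|\nabla u_r|^p\,dy\leq |B_\boz(x_0,r)|$, while $u_r\geq r/2$ on $B_\boz(x_0,r/2)$ and $u_r\equiv 0$ on $\boz\setminus B_\boz(x_0,r)$.

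For $1\leq p<n$, I would apply the local Sobolev--Poincar\'e inequality \eqref{e1.w1} to $u_r$ on the Euclidean ball $B(x_0,r)\cap\boz$. Splitting the infimum over $c\in\rr$ into the ranges $c\leq r/4$ and $c>r/4$, and using the values of $u_r$ on $B_\boz(x_0,r/2)$ and on $(B(x_0,r)\cap\boz)\setminus B_\boz(x_0,r)$, the inequality reduces to
\[
\tfrac{r}{4}\,\min\{|B_\boz(x_0,r/2)|,\ |(B(x_0,r)\cap\boz)\setminus B_\boz(x_0,r)|\}^{(n-p)/(np)}\leq C\,|B_\boz(x_0,r)|^{1/p}.
\]
This forks into two cases. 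In Case A the minimum is $|B_\boz(x_0,r/2)|$, which produces the self-improving recursion
\[
|B_\boz(x_0,r)|\geq c\,r^p\,|B_\boz(x_0,r/2)|^{(n-p)/n};
\]
iterating this recursion from scale $r$ down to the safe scale $\rho(x_0):=\dist(x_0,\partial\boz)$, where $B_\boz(x_0,\rho(x_0))=B(x_0,\rho(x_0))$ yields $|B_\boz(x_0,\rho(x_0))|=\omega_n\rho(x_0)^n$, and summing the resulting geometric series with ratio $a=(n-p)/n<1$, delivers $|B_\boz(x_0,r)|\geq c\,r^n$. In Case B the minimum is instead $|(B(x_0,r)\cap\boz)\setminus B_\boz(x_0,r)|$, which forces $|B(x_0,r)\cap\boz|\leq 2|B_\boz(x_0,r)|$; combined with the Euclidean Ahlfors $n$-regularity $|B(x_0,r)\cap\boz|\gtrsim r^n$ extracted from \cite{hkt08} and recalled just before the statement, this gives $|B_\boz(x_0,r)|\gtrsim r^n$ at once.

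For $p=n$, I would apply the Trudinger-type inequality \eqref{e1.w2} to $u_r$ and obtain the same Case A/Case B dichotomy, with the exponential Orlicz integral replacing the $L^{p^*}$ norm but yielding an iterable recursion of the same flavour. For $n<p<\fz$, the argument collapses to a single step: Morrey's embedding \eqref{e1.w3} applied with $x_1=x_0$ (where $u_r(x_0)=r$) and any Lebesgue point $x_2\in(B(x_0,r)\cap\boz)\setminus B_\boz(x_0,r)$ (whose existence, once $|B_\boz(x_0,r)|$ is not already of order $r^n$, is ensured by the Euclidean Ahlfors $n$-regularity of $\boz$) gives
\[
r=|u_r(x_1)-u_r(x_2)|\leq C|x_1-x_2|^{1-n/p}|B_\boz(x_0,r)|^{1/p}\leq Cr^{1-n/p}|B_\boz(x_0,r)|^{1/p},
\]
which rearranges to $|B_\boz(x_0,r)|\geq c\,r^n$.

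The hardest part will be the Case A/Case B dichotomy in the sub-critical regime: Case B must be closed by invoking the Euclidean Ahlfors $n$-regularity of $\boz$, which is exactly the Haj\l{}asz--Koskela--Tuominen result and so must be cited cleanly from \cite{hkt08}. A secondary technical point is verifying that the iteration in Case A converges unconditionally, which requires showing that the product of the geometric series with ratio $a=(n-p)/n$ and the vanishing factor $|B_\boz(x_0,r/2^k)|^{a^k}$ converges to a strictly positive limit, using only the trivial lower bound at scale $\rho(x_0)$. Once both are in place, intrinsic Ahlfors $n$-regularity holds uniformly in $x_0$ and $r$.
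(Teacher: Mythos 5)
Your argument is correct in outline, but it takes a genuinely different route from the paper's. The paper constructs a telescoping family of cutoffs $u_{x,b_{j+1},b_j}$ attached to the \emph{measure-halving} scales $b_j$ (defined so that $|B_\boz(x,b_jr)|=2^{-j}|B_\boz(x,r)|$), applies the local Sobolev--Poincar\'e inequality at each rung, deduces $b_jr-b_{j+1}r\lesssim |B_\boz(x,b_jr)|^{1/n}$, and sums a convergent geometric series in $j$ --- with no case split and no appeal to Euclidean regularity. You instead take a single tent function built from $d_\boz$, halve the \emph{radius}, and resolve the resulting Case~A/Case~B dichotomy, with Case~A yielding a self-improving recursion $|B_\boz(x_0,r)|\gtrsim r^p|B_\boz(x_0,r/2)|^{(n-p)/n}$ and Case~B closed by citing the Euclidean Ahlfors $n$-regularity that the paper attributes to \cite{hkt08}. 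Your Case~A iteration is sound: the exponent of $r$ telescopes to $n$ exactly because $p\sum_{j<K}\alpha^j + n\alpha^K=n$ with $\alpha=(n-p)/n$, the prefactors $c^{S_K}$, $2^{-pT_K}$, $\omega_n^{\alpha^K}$, $2^{-Kn\alpha^K}$ are all bounded below uniformly in $K$, and a Case~B occurrence at any intermediate scale propagates back up through the recursion; the $n<p<\fz$ one-step Morrey argument is also fine. What the paper's route buys is self-containment (it never needs the Euclidean density result as input, which is logically cleaner since that result is stated in the paper only as ``derivable from the proof of \cite{hkt08}'' rather than as a citable theorem) and the avoidance of a case split; what your route buys is conceptual simplicity in the test function and a scale-halving iteration that is perhaps easier to audit. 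One thing you should spell out if you write this up: in the $p=n$ strand the Case~A recursion is logarithmic rather than power-law, $r|B_\boz(x_0,r)|^{-1/n}\lesssim\bigl(\log\bigl(Cr^n/|B_\boz(x_0,r/2)|\bigr)\bigr)^{(n-1)/n}$, and verifying that this iterates to a uniform lower bound requires a short fixed-point argument for the map $\delta\mapsto C^{-n}(\log(C2^n/\delta))^{-(n-1)}$, which you only gesture at.
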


\begin{proof}
Fix $x\in\boz$ and $0<r<r_o$ with
\[r_o=\min\lf\{1, \lf(\frac{|\boz|}{2\omega_n}\r)^{\frac{1}{n}}\r\},\]
where $\omega_n$ means the volume of $n$-dimensional unit ball. Let $b_0 =1$ and $b_j \in (0,1) $ for $j \in N$ such that
\begin{equation}\label{equation1}
|B_\boz(x, b_j r)|=2^{-1}|B_\boz(x, b_{j-1 }r)|= 2^{-j} |B_\boz(x, r)|.
\end{equation}
For each $j\ge 0$, define a function $u_{x,b_{j+1}, b_j} $ on $\Omega$  by setting
\begin{align*}
u_{x,b_{j+1}, b_j}(y):=\left\{
\begin{array}{ll}
 1,        &    y \in B_\boz(x, b_{j+1} r), \\
\frac{b_{j}r-d_\boz(y, x)}{b_{j}r-b_{j+1} r},  \,\,   &   y \in B_\boz(x, b_{j}r)\setminus B_\boz(x, b_{j+1}r),\\
 0 ,      &      elsewhere.
\end{array} \right.
\end{align*}
Some simple computation gives
\begin{equation}
|\nabla u_{x, b_{j+1}, b_j}(y)|\leq\begin{cases}
\frac{1}{b_jr-b_{j+1}r},\  & \ y\in B_\boz(x, b_jr)\setminus B_\boz(x, b_{j+1}r),\\
0,\  &\ {\rm elsewhere}.
\end{cases}
\end{equation}
Some simple computation gives 
\begin{equation}
\int_\boz|u_{x, b_{j+1}, b_j}(y)|^pdy\leq|B_{\boz}(x, b_jr)|.\nonumber
\end{equation}
For every $\lambda \geq 1$, we have
\begin{eqnarray}\label{e2.1}
\left( \int_{\Omega} | \nabla u_{x,b_{j+1}, b_j}(y)|^{p} \, dy \right)^{\frac{1}{p}}=\left( \int_{ B(x,\lambda r) \cap \Omega} | \nabla u_{x,b_{j+1}, b_j}(y)|^{p} \, dy \right)^{\frac{1}{p}}\leq \frac{|B_\boz(x, b_{j}r)|^{\frac{1}{p}}}{b_{j}r- b_{j+1}r}.
\end{eqnarray}
Hence, $u_{x, b_{j+1}, b_j}\in W^{1, p}(\boz)$. According to the relationship between $p$ and $n$, we divide the argument below into three cases.

\noindent\underline{ \it Case ($1\leq p< n$): }
Let $j\geq 0$. For each $c \in \mathbb{R}$, by the definition of $u_{x, b_{j+1}, b_j}$, we have
$$u_{x,b_{j+1}, b_j}-c \geq \frac{1}{2}$$ either on $B_\boz(x,b_{j+1}r)$ or on $\Omega \setminus B_\boz(x,b_{j}r)$. Since $0<r<r_o$, we have
\[|\boz\setminus B_\boz(x, b_jr)|\geq |B_\boz(x, b_{j+1}r)|.\]
 Hence, we have
 \begin{equation}\label{eq:lower}
\inf \limits _{c\in\mathbb R} \left( \int_{B(x, r) \cap \Omega_0} |u_{x,b_{j+1}, b_j}(y)- c |^{\frac{np}{n-p}}\, dy \right)^{\frac{n-p}{np}} \geq\frac{1}{2}|B_\boz(x,b_{j+1}r)|^{\frac{n-p}{np}}.
\end{equation}
By combining (\ref{e1.w1}),  (\ref{equation1}), (\ref{e2.1}) and (\ref{eq:lower}), we obtain
\begin{eqnarray}
\left|B_\boz(x,b_{j}r)\right|^{\frac{n-p}{np}} \leq \frac{C|B_\boz(x, b_{j}r)|^{\frac{1}{p}}}{ b_{j}r- b_{j+1}r},
\end{eqnarray}
that is,
\begin{align*}
 b_{j}r- b_{j+1}r \leq C |B_\boz(x, b_{j}r)|^{\frac{1}{n}} \leq C 2^{-j/n}|B_\boz(x,r)|^{\frac{1}{n}}.
\end{align*}
Since $b_j \rightarrow 0$ as $ j \rightarrow \infty$, we have
\begin{align*}
r= \sum \limits _{j\geq 0} (b_{j}r- b_{j+1}r)
\leq C \sum \limits _{j \geq 0} 2^{-j/n}|B_\boz(x, r)|^{\frac{1}{n}}
\leq C |B_\boz(x, r)|^{\frac{1}{n}}.
\end{align*}
It implies $\boz$ is intrinsic Ahlfors $n$-regular.

\noindent\underline{ \it Case ($p= n$): }
As same as in the last case, for each $c \in\rr$, we have
$$| u_{x,b_{j+1}, b_j}-c| \geq \frac{1}{2}$$
either on $B_\boz(x,b_{j+1}r)$ or on $\boz \setminus B_\boz(x,b_{j}r)$.
Since
\[|\boz\setminus B_\boz(x, b_jr)|\geq |B_\boz(x, b_{j+1}r)|,\]
combine \eqref{e2.1} and \eqref{e1.w2}, we obtain
$$
|B_\boz(x,b_{j}r)| \exp \left(\frac{A( b_{j}r- b_{j+1}r)}{2|B_\boz( x, b_{j}r)|^{\frac{1}{n}}}\right)^{\frac{n}{n-1}} \leq C|B(x, r)|.
$$
It implies that
$$
b_{j}- b_{j+1}\leqslant \frac{2|B_\boz( x, b_{j}r)|^{\frac{1}{n}} }{Ar} \log^{\frac{n-1}{n}} \lf(\frac{C|B(x, r)|}{|B_\boz( x, b_{j}r)|}\r).
$$
This leads to
\begin{eqnarray*}
b_0&=& \sum \limits_ {j \geqslant 0}(b_{j}- b_{j+1})\\
& \leq& C \sum \limits_ {j \geqslant 0}\frac{|B_\boz( x, b_{j}r)|^{\frac{1}{n}}}{|B(x, r)|^{\frac{1}{n}}} \log^{\frac{n-1}{n}} \lf(\frac{|B(x, r)|}{|B_\boz( x, b_jr)|}\r)\\
&\leq& C \sum \limits_ {j \geqslant 0} 2^{-j/n}\frac{|B_\boz(x, r)|^{\frac{1}{n}}}{|B(x, r)|^{\frac{1}{n}}}\log^{\frac{n-1}{n}}\lf( \frac{2^{nj}|B(x, r)|}{|B_\boz( x, r)|}\r)\\
& \leq& C\sum\limits_{j\geq 0}2^{-j/n}\frac{|B_\boz(x, r)|^{\frac{1}{n}}}{|B(x, r)|^{\frac{1}{n}}}\lf(\log(2^{jn})+\log\lf(\frac{|B(x, r)|}{|B_\boz(x, r)|}\r)\r)^{\frac{n-1}{n}}\\
&\leq& C\sum\limits_{j\geq 0}2^{-j/n}\log^{\frac{n-1}{n}}(2^{jn})\frac{|B_\boz(x, r)|^{\frac{1}{n}}}{|B(x,r)|^{\frac{1}{n}}}\\
& & +C\sum\limits_{j\geq 0}2^{-j/n}\frac{|B_\boz(x, r)|^{\frac{1}{n}}}{|B(x, r)|^{\frac{1}{n}}}\log^{\frac{n-1}{n}}\lf(\frac{|B(x, r)|^{\frac{1}{n}}}{|B_{\boz}(x, r)|^{\frac{1}{n}}}\r)\\
&\leq& C\frac{|B_\boz(x, r)|^{\frac{1}{n}}}{|B(x, r)|^{\frac{1}{n}}}+C\frac{|B_\boz(x, r)|^{\frac{1}{n}}}{|B(x ,r)|^{\frac{1}{n}}}\log^{\frac{n-1}{n}}\lf(\frac{|B(x, r)|^{\frac{1}{n}}}{|B_\boz(x, r)|^{\frac{1}{n}}}\r)=:A_1+A_2.
\end{eqnarray*}
Hence, at least one of $A_1$ and $A_2$ must be not less than $\frac{1}{2}$.  If $A_1\geq\frac{1}{2}$, then we have
\[|B_\boz(x, r)|\geq cr^n\]
for some small constant $0<c<1$. It implies that $\boz$ is intrinsic Ahlfros $n$-regular. If $A_2\geq\frac{1}{2}$, then we have
\[\frac{|B_\boz(x, r)|^{\frac{1}{n}}}{|B(x, r)|^{\frac{1}{n}}}\log^{\frac{n-1}{n}}\lf(\frac{|B(x, r)|^{\frac{1}{n}}}{|B_\boz(x, r)|^{\frac{1}{n}}}\r)\geq\tilde c\]
for a sufficiently small $0<\tilde c<1$. Since
\[|B_\boz(x, r)|\leq|B(x, r)|,\]
it implies that there exists a small enough constant $c>0$ such that
\[|B_\boz(x, r)|\geq cr^n.\]
Hence, $\boz$ is intrinsic Ahlfors $n$-regular.

\noindent\underline{ \it Case ($n<p<\fz$): }
For every $j\geq 0$, fix a point $x_j\in\partial_\boz B_\boz(x, b_jr)$. By the definition of intrinsic metric, for arbitrary $\epsilon>0$, there exists a curve $\gamma_\epsilon\subset\boz$ joining $x$ and $x_j$ such that
\[l(\gamma_\epsilon)<b_jr+\epsilon.\]
Choose one point $\tilde x_j\in\gamma_\epsilon$ such that
\[l\lf(\gamma_\epsilon^{[\tilde x_j, x_j]}\r)=b_jr-b_{j+1}r+\epsilon.\]
Then the triangle inequality gives
\[\tilde x_j\in B_\boz(x, b_{j+1}r)\ {\rm and}\ |x_j-\tilde x_j|\leq b_jr-b_{j+1}r+\epsilon.\]
Apply inequalities (\ref{e1.w3}) and (\ref{e2.1}) to the function $u_{x, b_{j+1}, b_j}$, we obtain
\[1\leq C\lf(b_jr-b_{j+1}r+\epsilon\r)^{1-\frac{n}{p}}\frac{|B_\boz(x, b_jr)|^{\frac{1}{p}}}{b_jr-b_{j+1}r}.\]
Since $\epsilon>0$ is arbitrary, we have
\[b_jr-b_{j+1}r\leq C|B_\boz(x, b_jr)|^{\frac{1}{n}}.\]
Sum up $j$, we obtain
\begin{eqnarray*}
b_0r=\sum\limits_{j\geq 0}(b_jr-b_{j+1}r)\leq C\sum\limits_{j\geq 0}|B_\boz(x, b_jr)|^{\frac{1}{n}}\leq C\sum\limits_{j\geq 0}2^{-j/n}|B_\boz(x, r)|^{\frac{1}{n}}.
\end{eqnarray*}
It implies that there exists a small enough constant $0<c<1$ such that
\[|B_\boz(x, r)|\geq cr^n.\]
So $\boz$ is intrinsic Ahlfors $n$-regular.
\end{proof}

Some of recent research in analysis on metric measure space $(X, d, \mu)$ has focused on the geometric and analytic properties of Poincar\'e inequalities, for example, see \cite{DJS:2012, DJS:2016, DSW:2012, HK:1998, K:2002, KZ:2008} and other relative articles. Following \cite{HK:1998}, we say a metric measure space $(X, d, \mu)$ is a $p$-Poincar\'e space, if for every $u\in N^{1,p}(X)$, every upper gradient $\rho$ of $u$ and every ball $B\subset X$,  the $p$-Poincar\'e inequality 
\[\fint_{B}\lf|u-u_B\r|d\mu\leq C\diam(B)\lf(\fint_{\lambda B}\rho^pd\mu\r)^{\frac{1}{p}}\]
holds with constants $C, \lambda$ independent of $u$ and $B$. Here $\lambda B$ is the ball with the same center as $B$ and whose radius equals to $\lambda$ times the radius of $B$. Here $N^{1, p}(X)$ means the Newtonian space of order $p$ defined on $X$. The Newtonian space was first defined by Shanmugalingam in \cite{S:2000}. The Newtonian space generalizes the definition of Sobolev space to metric measure spaces.  Come back to an arbitrary Euclidean domain $\boz\subset\rn$, we always have that $N^{1, p}(\boz)$ equals to $W^{1, p}(\boz)$ for arbitrary $1\leq p<\fz$. Please refer to \cite{S:2000} for details. Implied by these remarkable works, we define the following Poincar\'e domains in the Euclidean space $\rn$.
\begin{defn}\label{de:poindom}
We say a domain $\boz\subset\rn$ is a $p$-Poincar\'e domain for $1\leq p<\fz$, if for every $u\in W^{1, p}(\boz)$ and $x\in\boz$ and $0<r<\diam(\boz)$, we have 
\begin{equation}\label{eq:poincare}
\fint_{B(x, r)\cap\boz}\lf|u(y)-u_{B(x, r)\cap\boz}\r|dy\leq Cr\lf(\fint_{B(x, \lambda r)\cap\boz}\lf|\nabla u(y)\r|^pdy\r)^{\frac{1}{p}}
\end{equation}
with constants $C, \lambda$ independent of $u$, $x$ and $r$.
\end{defn}
In next lemma, we will see that a local Sobolev-Poincar\'e imbedding domain of order $p$ is also a $p$-Poincar\'e domain.
\begin{lem}\label{le:SPtoP}
Let $\boz\subset\rn$ be a local Sobolev-Poincar\'e imbedding domain of order $p$ for $1\leq p<\fz$. Then it is also a $p$-Poincar\'e domain.
\end{lem}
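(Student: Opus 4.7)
The plan is to split into the three cases $1\le p<n$, $p=n$, and $n<p<\fz$, and in each case derive \eqref{eq:poincare} from the corresponding local imbedding inequality \eqref{e1.w1}--\eqref{e1.w3}, using H\"older's or Jensen's inequality to convert the stronger imbedding norm into the $L^1$ oscillation, together with the intrinsic Ahlfors $n$-regularity established in Lemma \ref{l2.3}. Throughout we write $B:=B(x,r)\cap\boz$ and use the elementary fact that $\fint_B|u-u_B|\,dy\le 2\fint_B|u-c|\,dy$ for any $c\in\rr$, so it suffices to estimate $\inf_c\fint_B|u-c|\,dy$. We may also assume $\|\nabla u\|_{L^p(B(x,\lambda r)\cap\boz)}>0$, the other case being trivial.

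For $1\le p<n$, H\"older's inequality bounds the $L^1$-average by the $L^{np/(n-p)}$-average with a factor $|B|^{-(n-p)/(np)}$, and \eqref{e1.w1} controls the latter by $|B|^{-1}\cdot C(\int_{B(x,\lambda r)\cap\boz}|\nabla u|^p)^{1/p}$ (after multiplying and dividing by $|B(x,\lambda r)\cap\boz|$). The gain factor is
\[
|B(x,r)\cap\boz|^{-(n-p)/(np)}\,|B(x,\lambda r)\cap\boz|^{1/p}\ls r\,,
\]
where the upper bound on the numerator follows from the trivial inclusion $B(x,\lambda r)\cap\boz\subset B(x,\lambda r)$, and the crucial lower bound $|B(x,r)\cap\boz|\ge |B_\boz(x,r)|\gs r^n$ is the intrinsic Ahlfors $n$-regularity of $\boz$ from Lemma \ref{l2.3}.

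For $p=n$, apply Jensen's inequality to the convex function $\exp$ inside \eqref{e1.w2} to obtain
\[
\exp\lf(\fint_B\lf(\frac{A|u-c|}{\|\nabla u\|_{L^n(B(x,\lambda r)\cap\boz)}}\r)^{n/(n-1)}dy\r)\le\frac{C|B(x,r)|}{|B(x,r)\cap\boz|}\le C',
\]
where the last step uses Ahlfors $n$-regularity. Taking logarithms, then using Jensen again to pass from the $L^{n/(n-1)}$-average down to the $L^1$-average, and finally converting $\|\nabla u\|_{L^n}=|B(x,\lambda r)\cap\boz|^{1/n}(\fint|\nabla u|^n)^{1/n}\ls r(\fint|\nabla u|^n)^{1/n}$, produces \eqref{eq:poincare} with $p=n$ after minimizing over $c$.

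For $n<p<\fz$ the argument is direct: by \eqref{e1.w3}, for a.e.\ $x_1,x_2\in B$ we have $|u(x_1)-u(x_2)|\le C(2r)^{1-n/p}(\int_{B(x,\lambda r)\cap\boz}|\nabla u|^p)^{1/p}$, so integrating in both variables and dividing by $|B|^2$ gives
\[
\fint_B|u-u_B|\,dy\le\fint_B\fint_B|u(x_1)-u(x_2)|\,dx_1dx_2\le Cr^{1-n/p}|B(x,\lambda r)\cap\boz|^{1/p}\lf(\fint_{B(x,\lambda r)\cap\boz}|\nabla u|^p\r)^{1/p},
\]
and $|B(x,\lambda r)\cap\boz|^{1/p}\le Cr^{n/p}$ gives the factor $Cr$. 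The main obstacle in cases $1\le p<n$ and $p=n$ is producing the correct power of $r$ on the right-hand side; this is precisely where we need the lower bound $|B(x,r)\cap\boz|\gs r^n$ from Lemma \ref{l2.3}, and without it the scaling simply fails.
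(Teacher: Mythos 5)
Your proof is correct and follows essentially the same route as the paper: split into the three cases according to whether $p<n$, $p=n$, or $p>n$, use the intrinsic Ahlfors $n$-regularity from Lemma~\ref{l2.3} to normalize the measures, and then convert the local imbedding inequality into an $L^1$-oscillation bound via H\"older (for $1\le p<n$), Jensen (for $p=n$), or double integration of the Morrey estimate (for $n<p<\fz$). One small slip: where you write that \eqref{e1.w1} controls the expression by ``$|B|^{-1}\cdot C(\int|\nabla u|^p)^{1/p}$'' the exponent on $|B|$ should be $-(n-p)/(np)$, as your subsequent displayed ``gain factor'' $|B(x,r)\cap\boz|^{-(n-p)/(np)}|B(x,\lambda r)\cap\boz|^{1/p}\ls r$ correctly reflects.
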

\begin{proof}
According to $p$ is less than or equivalent to or larger than $n$, we divide the following argument into three cases.

\noindent\underline{ \it Case ($1\leq p< n$): } For every $u\in W^{1, p}(\boz)$, $x\in\boz$ and $0<r<\diam(\boz)$, we have 
\[\inf \limits _{c\in\mathbb{R}}\lf(\int_{B(x, r)\cap\boz}\lf|u(y)-c\r|^{\frac{np}{n-p}}dy\r)^{\frac{n-p}{np}}\le C\lf(\int_{B(x, \lambda r)\cap\boz}|\nabla u(y)|^pdy\r)^{\frac{1}{p}}\]
with constants $C, \lambda>1$ independent of $u, x$ and $r$. This inequality is equivalent to 
\[\lf(\int_{B(x, r)\cap\boz}\lf|u(y)-u_{B(x, r)\cap\boz}\r|^{\frac{np}{n-p}}dy\r)^{\frac{n-p}{np}}\leq C\lf(\int_{B(x, \lambda r)\cap\boz}|\nabla u(y)|^pdy\r)^{\frac{1}{p}}\]
with a constant $C$ which may be different to the one above. By Lemma \ref{l2.3}, $\boz$ is intrinsic Ahlfors $n$-regular, so the inequality above gives 
\[\lf(\fint_{B(x, r)\cap\boz}\lf|u(y)-u_{B(x, r)\cap\boz}\r|^{\frac{np}{n-p}}dy\r)^{\frac{n-p}{np}}\leq Cr\lf(\fint_{B(x, \lambda r)\cap\boz}|\nabla u(y)|^pdy\r)^{\frac{1}{p}}.\] 
Then the H\"older inequality implies
\[\fint_{B(x, r)\cap\boz}\lf|u(y)-u_{B(x, r)\cap\boz}\r|dy\leq Cr\lf(\fint_{B(x, \lambda r)\cap\boz}|\nabla u(y)|^pdy\r)^{\frac{1}{p}}.\]
It shows $\boz\subset\rn$ is a $p$-Poincar\'e domain.

\noindent\underline{\it Case ($p=n$):} For every $u\in W^{1, n}(\boz)$, $x\in\boz$ and $0<r<\diam(\boz)$, we have 
\[\inf \limits_{c\in\mathbb{R}} \int_{B(x,r)\cap\Omega} \exp\left(\frac{A|u(y)-c|}{\|\nabla u\|_{L^{n}(B(x, \lambda r)\cap\Omega)}}\right)^{\frac{n}{n-1}}\,dy
\le C|B(x, r)|.\]
Since $\boz$ is intrinsic Ahlfors $n$-regular, the inequality above is equivalent to 
\[\fint_{B(x, r)\cap\boz}\exp\lf(\frac{A|u(y)-u_{B(x, r)\cap\boz}|}{\|\nabla u\|_{L^n\lf(B(x, \lambda r)\cap\boz\r)}}\r)^{\frac{n}{n-1}}dy\leq C.\]
It is easy to check the function 
\[f(t)=\exp(t^{\frac{n}{n-1}})\]
is convex on $(0, \fz)$.  Then the Jensen inequality and the fact that $\boz$ is intrinsic Ahlfors $n$-regular imply the following inequality that 
\[\fint_{B(x, r)\cap\boz}\lf|u(y)-u_{B(x, r)\cap\boz}\r|dy\leq Cr\lf(\fint_{B(x, \lambda r)\cap\boz}|\nabla u(y)|^ndy\r)^{\frac{1}{n}}.\]
It shows $\boz$ is a $n$-Poincar\'e domain.

\noindent\underline{\it Case ($n<p<\fz$):} For every $u\in W^{1,p}(\boz)$, $x\in\boz$ and $0<r<\diam(\boz)$, we have 
\begin{eqnarray}
\lf|u(x_1)-u(x_2)\r|&\leq&C|x_1-x_2|^{1-\frac{n}{p}}\lf(\int_{B(x, \lambda r)\cap\boz}|\nabla u(y)^pdy|\r)^{\frac{1}{p}}\nonumber\\
                             &\leq&Cr^{1-\frac{n}{p}}\lf(\int_{B(x, \lambda r)\cap\boz}|\nabla u(y)|^pdy\r)^{\frac{1}{p}}\nonumber
\end{eqnarray}
for almost every $x_1, x_2\in B(x, r)\cap\boz$. Do the integration twice over the set $B(x, r)\cap\boz$. Then the triangle inequality and the fact $\boz$ is intrinsic Ahlfors $n$-regular give the following inequality that
\begin{eqnarray}
\fint_{B(x, r)\cap\boz}\lf|u(y)-u_{B(x, r)\cap\boz}\r|dy&\leq&\fint_{B(x, r)\cap\boz}\fint_{B(x, r)\cap\boz}|u(x_1)-u(x_2)|dx_1dx_2\nonumber\\
                                                                                  &\leq&Cr\lf(\fint_{B(x, \lambda r)\cap\boz}|\nabla u(y)|^pdy\r)^{\frac{1}{p}}.\nonumber
\end{eqnarray} 
It shows $\boz$ is a $p$-Poincar\'e domain. 
\end{proof}

Implied by Lemma \ref{le:unisli}, to prove the sufficient part of Theorem \ref{t1.1}, we should show the domain $\boz$ is quasiconvex and $LLC$. For a metric measure space $(X, d, \mu)$, we say $\mu$ is doubling, if there exists a large enough constant $C>1$ such that for every $x\in X$ and $0<r<\diam(X)$, we have 
\[\mu(B(x, 2r))\leq C\mu(B(x, r)).\] 
It was proved by David and Semmes in \cite{DS:1993} that a complete $p$-Poincar\'e space $(X, d, \mu)$ with $1\leq p<\fz$ and a doubling $\mu$ is quasiconvex. However, a domain $\boz\subset\rn$ obviously is not complete. However, the Sobolev extension property of relative domains can help us to overcome this gap. 
\begin{defn}\label{de:extension}
A domain $\boz\subset\rn$ is called a $W^{1, p}$-extension domain for $1\leq p\leq \fz$, if for every $u\in W^{1, p}(\boz)$, there exists a function $E(u)\in W^{1, p}(\rn)$ such that the restriction of $E(u)$ onto the domain $\boz$ always equals to $u$ and the inequality
\[\|E(u)\|_{W^{1, p}(\rn)}\leq C\|u\|_{W^{1, p}(\boz)}\]
holds with a constant $C$ independent of $u$. 
\end{defn}
The study of geometrical and analytical properties of Sobolev extension domains has been one of the central topics in analysis for long time. In \cite{C:1961, s70}, Calder\'on and Stein proved every Lipschitz domain is a $W^{1, p}$-extension domain for every $1\leq p\leq\fz$. In few years, Jones generalized this result to the class of the so-called $(\epsilon, \delta)$-domains. It is a much larger class than the class of Lipschitz domains. By the result in \cite{hkt08, hkt08b} from Haj\l{}asz, Koskela and Tuominen, for every $W^{1, p}$-extension domain with $1<p<\fz$, there always exists a bounded linear extension operator. Also refer to \cite{S:2006, S:2007} for this result. There are also so many other interesting articles about Sobolev extension theory that we cannot list all of them in references. For those who are interested in it, please search them online. Since the Euclidean space $\rn$ is obviously a $p$-PI space for every $1<p<\fz$, the following lemma is a special case of \cite[Proposition 1.9]{GIZ:2023}. Please refer to  \cite{GIZ:2023} for the definition of $p$-PI spaces.
\begin{lem}\label{le:exten}
Let $\boz\subset\rn$ be an Ahlfors $n$-regular $p$-Poincar\'e domain for $1<p<\fz$. Then it is also a $W^{1, p}$-extension domain. Especially, a local Sobolev-Poincar\'e embedding domain of order $p$ for $1<p<\fz$ is a $W^{1, p}$-extension domain.
\end{lem}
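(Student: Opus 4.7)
The plan is to reduce the statement to \cite[Proposition 1.9]{GIZ:2023}, which is why the authors phrase the lemma in the language of Ahlfors $n$-regular $p$-Poincar\'e domains rather than proving an extension result from scratch. The first step is to record that the ambient space $\rn$ equipped with Lebesgue measure and the Euclidean metric is a $p$-PI space for every $1<p<\fz$: Lebesgue measure is doubling and $\rn$ supports the classical $(1,p)$-Poincar\'e inequality on every ball. Thus the ambient hypothesis required by \cite{GIZ:2023} is automatic.

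Next I would verify that a domain $\boz\subset\rn$ satisfying the two hypotheses of the lemma (Ahlfors $n$-regular plus $p$-Poincar\'e in the sense of Definition \ref{de:poindom}) is exactly the kind of subdomain to which \cite[Proposition 1.9]{GIZ:2023} applies. Concretely: the $p$-Poincar\'e inequality in Definition \ref{de:poindom} provides, for every $u\in W^{1,p}(\boz)$, the correct local mean oscillation bound in terms of the gradient on a dilate $B(x,\lambda r)\cap\boz$, and the $n$-regularity guarantees that $\boz$ inherits the measure-theoretic thickness needed by the extension construction in \cite{GIZ:2023} (so that restrictions of test functions on $\rn$ behave well on $\boz$ and conversely). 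With these two inputs, \cite[Proposition 1.9]{GIZ:2023} gives the existence of a bounded linear extension operator $E\colon W^{1,p}(\boz)\to W^{1,p}(\rn)$, which is exactly the statement that $\boz$ is a $W^{1,p}$-extension domain.

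For the ``especially'' clause I would combine the two preceding lemmas in this section. Lemma \ref{l2.3} shows that any local Sobolev-Poincar\'e imbedding domain of order $p$ is intrinsic Ahlfors $n$-regular, and intrinsic $n$-regularity trivially implies Ahlfors $n$-regularity since $B_\boz(x,r)\subset B(x,r)\cap\boz$. Lemma \ref{le:SPtoP} shows that such a domain is also a $p$-Poincar\'e domain in the sense of Definition \ref{de:poindom}. Applying the first half of the present lemma then yields the $W^{1,p}$-extension property.

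The only subtle point, and the one I expect to be the main obstacle, is matching Definition \ref{de:poindom} (which is formulated with Euclidean balls $B(x,r)\cap\boz$) to whatever notion of Poincar\'e domain is used in \cite{GIZ:2023}; in that reference the inequality may be stated using intrinsic balls $B_\boz(x,r)$ or in the metric measure space $(\boz,|\cdot|,\mathcal{L}^n|_\boz)$. To bridge this I would invoke Ahlfors $n$-regularity together with the trivial containment $B_\boz(x,r)\subset B(x,r)\cap\boz$ to pass between the two formulations, observing that a standard chaining argument (using the $p$-Poincar\'e inequality on Euclidean balls, which is what we have) delivers the metric-ball version required by \cite{GIZ:2023}. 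Once this translation is made, the cited proposition closes the argument.
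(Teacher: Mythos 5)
Your proposal takes essentially the same approach as the paper: the paper gives no proof of this lemma at all, but simply records (in the sentence preceding the statement) that since $\rn$ is a $p$-PI space, the lemma is a special case of \cite[Proposition 1.9]{GIZ:2023}, and the ``especially'' clause is left to follow from Lemma \ref{l2.3} together with Lemma \ref{le:SPtoP} exactly as you describe. Your elaboration of why $\rn$ is a $p$-PI space and your caution about reconciling Definition \ref{de:poindom} (Euclidean balls) with whatever formulation \cite{GIZ:2023} uses are both reasonable amplifications of what the paper leaves implicit, but they do not constitute a different route.
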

The H\"older inequality directly implies that a $p$-Poincar\'e domain is also a $\tilde p$-Poincar\'e domain for every $1\leq p\leq\tilde p<\fz$. Hence, an Ahlfors $n$-regular $p$-Poincar\'e domain is always a $W^{1, \tilde p}$-extension domain for arbitrary $1\leq p<\tilde p<\fz$. 
Let us recall the definition of $p$-capacity which can be found in \cite{GR:1990, HKM:1993,KUZ:2022,M:book11}.
\begin{defn}\label{de:capacity}
A condenser in a domain $\boz\subset\rn$ is a pair $(E, F)$ of bounded compact connected subsets of $\overline\boz$ with 
$$\dist(E, F)>0.$$ 
Fix $1\leq p<\fz$.The set of $p$-admissible functions for the triple $(E, F;\boz)$ is 
\[\mathcal W_p(E, F; \boz):=\lf\{u\in W^{1, p}(\boz)\cap C(\boz\cup E\cup F): u\big|_E\geq 1, u\big|_F\leq 0\r\}.\]
We define the $p$-capacity of the pair $(E, F)$ with respect to $\boz$ by setting 
\[Cap_p(E, F; \boz):=\inf_{u\in\mathcal W_p(E, F; \boz)}\int_\boz|\nabla u(x)|^pdx.\]
For a pair $(U, V)$ of arbitrary bounded connected open subsets of $\boz$ with $\dist(U, V)>0$, we define the $p$-capacity by setting 
\[Cap_p(U, V; \boz):=\sup_{E\subset U, F\subset F}Cap_p(E, F; \boz),\]
where the supremum is taken over all compact connected subsets of $\boz$ with $E\subset U$ and $F\subset V$.
\end{defn}
For a $W^{1, p}$-extension domain, the following lemma tells us the $p$-capacity with respect to the full space $\rn$ can be controlled from above by the $p$-capacity with respect to the domain. The readers can find a proof in \cite{K:1990}
\begin{lem}\label{le:capin}
Let $\boz\subset\rn$ be a bounded $W^{1, p}$-extension domain for $1<p<\fz$. Then for every bounded connected open subsets $U, V\subset\boz$ with 
\[\dist(U, V)>0,\] 
the inequality 
\begin{equation}\label{eq:capin}
Cap_p\lf(U, V; \rn\r)\leq C Cap_p\lf(U, V; \boz\r)
\end{equation}
holds with a constant $C$ independent of $U$ and $V$.
\end{lem}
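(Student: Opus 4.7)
My plan is to promote a near-optimal admissible function for $Cap_p(E,F;\boz)$ to an admissible function for $Cap_p(E,F;\rn)$ by feeding it through the bounded linear $W^{1,p}$-extension operator, after subtracting its mean so that only the gradient norm enters the comparison.

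First I would reduce to compact condensers: by the supremum definition of $p$-capacity for an open pair $(U,V)$, it suffices to prove
\[Cap_p(E,F;\rn)\le C\,Cap_p(E,F;\boz)\]
for every pair of compact connected subsets $E\subset U$ and $F\subset V$ with $\dist(E,F)>0$, with $C$ independent of the condenser. Given such $(E,F)$ and $\ez>0$, pick $u\in\mathcal W_p(E,F;\boz)$ with $\int_\boz|\nabla u|^p\,dx\le Cap_p(E,F;\boz)+\ez$, and replace it by its truncation $v:=\min(\max(u,0),1)$; then $v\in[0,1]$, $v\equiv 1$ on $E$, $v\equiv 0$ on $F$, and $\int_\boz|\nabla v|^p\,dx\le\int_\boz|\nabla u|^p\,dx$.

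Next I would extend $v$ in a gradient-stable way. Write $w:=v-v_\boz$ and apply the bounded linear extension operator $\mathcal E:W^{1,p}(\boz)\to W^{1,p}(\rn)$, so that $\|\mathcal E(w)\|_{W^{1,p}(\rn)}\le C\|w\|_{W^{1,p}(\boz)}$. Since $\boz$ is Ahlfors $n$-regular and a $W^{1,p}$-extension domain, it supports a $p$-Poincar\'e inequality, giving $\|w\|_{L^p(\boz)}\le C\|\nabla v\|_{L^p(\boz)}$, hence $\|\mathcal E(w)\|_{W^{1,p}(\rn)}\le C\|\nabla v\|_{L^p(\boz)}$. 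Fix $\eta\in C_c^\infty(\rn)$ with $\eta\equiv 1$ on a neighborhood of $\overline{\boz}$, and set
\[\hat v:=\mathcal E(w)+v_\boz\,\eta\in W^{1,p}(\rn).\]
On $\boz$ one has $\hat v=w+v_\boz=v$, so $\hat v|_E=1$ and $\hat v|_F=0$. Truncating once more in $\rn$, $\tilde v:=\min(\max(\hat v,0),1)$ preserves admissibility and obeys $|\nabla\tilde v|\le|\nabla\hat v|$ a.e.; a harmless mollification away from $E\cup F$ delivers the continuity required by $\mathcal W_p(E,F;\rn)$.

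The main obstacle is obtaining a purely multiplicative gradient estimate: the extension operator controls the full $W^{1,p}$-norm of $\hat v$ in terms of the full $W^{1,p}$-norm of $v$, whereas $p$-capacity only sees the gradient. The mean-subtraction trick combined with Poincar\'e on $\boz$ yields $\|\nabla\mathcal E(w)\|_{L^p(\rn)}\le C\|\nabla v\|_{L^p(\boz)}$; the remaining term $|v_\boz|\,\|\nabla\eta\|_{L^p(\rn)}$ must then be absorbed into the final estimate by choosing $\eta$ adapted to the geometry of $\boz$ (for instance, supported in a fixed neighborhood of $\overline{\boz}$ whose size depends only on $\boz$). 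Letting $\ez\to 0$ and taking the supremum over compact $(E,F)\subset(U,V)$ then gives \eqref{eq:capin}.
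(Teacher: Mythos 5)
The paper does not give its own proof of this lemma; it simply cites Koskela's dissertation \cite{K:1990}. So the comparison below is with the standard argument rather than with text in the paper.

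Your overall strategy (truncate, make the $L^p$-norm of the test function controllable by its gradient norm, extend, truncate again) is exactly the right one, and the reduction to compact condensers, the first truncation, and the Poincar\'e-plus-extension step are all fine. The gap is precisely the one you flag yourself: the term $|v_\boz|\,\|\nabla\eta\|_{L^p(\rn)}$ does \emph{not} get absorbed by ``choosing $\eta$ adapted to the geometry of $\boz$.'' For any admissible cutoff $\eta$ (identically $1$ near $\overline\boz$, compactly supported) the quantity $\|\nabla\eta\|_{L^p(\rn)}$ is bounded below by a positive constant depending only on $\boz$, while $v_\boz$ can sit near $1/2$ even when $Cap_p(E,F;\boz)$ is as small as you like (e.g.\ take $E,F$ tiny balls far apart inside $\boz$). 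Thus your argument only yields the additive bound
\[
Cap_p(E,F;\rn)^{1/p}\ \leq\ C\,Cap_p(E,F;\boz)^{1/p}+C',
\]
which is strictly weaker than the multiplicative estimate \eqref{eq:capin} and degenerates precisely in the regime of small capacity, which is the interesting one.

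The standard way to avoid this is to replace the mean-subtraction step by a level-set truncation plus a dichotomy. Starting from the truncated $v\in[0,1]$, one of the two sets $\{v\leq 1/2\}$, $\{v>1/2\}$ has measure at least $|\boz|/2$. In the first case set $\tilde v:=2\max(v-\tfrac12,0)$; then $\tilde v=1$ on $E$, $\tilde v=0$ on $F$, $|\nabla\tilde v|\leq 2|\nabla v|$, and $\tilde v$ \emph{vanishes on a set of measure $\geq|\boz|/2$}. For such a function the Poincar\'e inequality on the bounded extension domain $\boz$ gives the pure gradient bound $\|\tilde v\|_{L^p(\boz)}\leq C\|\nabla\tilde v\|_{L^p(\boz)}$, so $\|\tilde v\|_{W^{1,p}(\boz)}\leq C\|\nabla v\|_{L^p(\boz)}$ and the extension $\mathcal E\tilde v$ (after a final truncation to $[0,1]$) is admissible for $(E,F;\rn)$ with $\|\nabla\mathcal E\tilde v\|_{L^p(\rn)}^p\leq C\,Cap_p(E,F;\boz)$ — no additive error, because no constant had to be re-inserted outside $\boz$. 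In the second case apply the same construction to $1-v$, which is admissible for $(F,E;\boz)$ and now satisfies $|\{1-v\leq 1/2\}|\geq|\boz|/2$; one obtains an admissible competitor for $(F,E;\rn)$ with controlled gradient, and one concludes by the symmetry of condenser $p$-capacity in $E$ and $F$. In short: the mean is the wrong constant to subtract, because it does not force the competitor to vanish on a large set; subtracting a well-chosen level value does, and this is what kills the additive term.

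Two smaller points worth tightening in a final write-up: (i) the Poincar\'e inequality you invoke should be justified from the extension property of the bounded connected domain $\boz$ alone (the lemma does not assume Ahlfors regularity), and (ii) the ``harmless mollification away from $E\cup F$'' needed to recover the continuity in the definition of $\mathcal W_p(E,F;\rn)$ has to be done carefully so as not to destroy the boundary values on $E$ and $F$.
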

In the existing literature, the domain supporting (\ref{eq:capin}) is called a $p$-quasiextremal distance domain. See \cite{gm85b} for $p=n$ and \cite{K:1990} for arbitrary $1<p<\fz$. It was also proved in \cite{K:1990} that a $p$-quasiextremal distance domain for $n\leq p<\fz$ must be quasiconvex. Now, we are ready to prove that a local Sobolev-Poincar\'e imbedding domain of order $p$ for $1\leq p<\fz$ must be quasiconvex.
\begin{lem}\label{le:quasiconvex}
Let $\boz\subset\rn$ be a bounded local Sobolev-Poincar\'e imbedding domain of order $p$ for $1\leq p<\fz$. Then it must be quasiconvex. 
\end{lem}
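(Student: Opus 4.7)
The plan is to reduce to Koskela's theorem cited in the paragraph just above the lemma: every bounded $q$-quasiextremal distance ($q$-QED) domain with $n\le q<\fz$ is quasiconvex. It therefore suffices to exhibit some exponent $q\ge n$ for which $\boz$ satisfies the capacity comparison of Lemma~\ref{le:capin}.

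First, I would harvest the earlier lemmas. Lemma~\ref{l2.3} gives that $\boz$ is intrinsic Ahlfors $n$-regular, and in particular Ahlfors $n$-regular. Lemma~\ref{le:SPtoP} shows that $\boz$ is a $p$-Poincar\'e domain, and the H\"older inequality promotes this to a $\tilde p$-Poincar\'e property for every $\tilde p\in[p,\fz)$. I then pick $q:=\max\{p,n\}$ when $p>1$ and $q:=n$ when $p=1$; in either case $q\ge n$ and $q>1$, and $\boz$ is an Ahlfors $n$-regular $q$-Poincar\'e domain.

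Second, Lemma~\ref{le:exten} (applied directly when $q=p>1$, or via the H\"older upgrade remarked on immediately after Lemma~\ref{le:exten} when $q>p$) yields that $\boz$ is a $W^{1,q}$-extension domain. Lemma~\ref{le:capin} at exponent $q$ then gives
\[Cap_q(U,V;\rn)\le C\,Cap_q(U,V;\boz)\]
for every pair of bounded connected open sets $U,V\subset\boz$ with $\dist(U,V)>0$, which is precisely the $q$-QED condition. Because $q\ge n$, the cited result of Koskela \cite{K:1990} now delivers quasiconvexity of $\boz$.

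The main care is at the endpoint $p=1$, where Lemma~\ref{le:exten} is not literally applicable; the point of passing to the larger exponent $q=n>1$ via H\"older is precisely to bypass this, after which the chain extension $\Rightarrow$ capacity comparison $\Rightarrow$ quasiconvexity runs uniformly across $1\le p<\fz$. No further geometric input is needed beyond what the preceding lemmas and the cited Koskela theorem already supply.
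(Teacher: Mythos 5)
Your proof is correct and follows essentially the same route as the paper: intrinsic Ahlfors $n$-regularity from Lemma~\ref{l2.3}, upgrade to a $\tilde p$-Poincar\'e domain with $\tilde p\ge n$ via Lemma~\ref{le:SPtoP} and H\"older, then Lemma~\ref{le:exten} for the extension property, Lemma~\ref{le:capin} for the QED comparison, and Koskela's theorem for quasiconvexity. Your explicit handling of the $p=1$ endpoint is a small clarification of what the paper leaves implicit in its phrase ``$\tilde p$-Poincar\'e domain for some $n\le\tilde p<\fz$.''
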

\begin{proof}
Let $\boz\subset\rn$ be a local Sobolev-Poincar\'e imbedding domain of order $p$ for $1\leq p<\fz$. The Lemma \ref{l2.3} tells us that it is Ahlfors $n$-regular. By Lemma \ref{le:SPtoP} and the H\"older inequality, $\boz$ is a $\tilde p$-Poincar\'e domain for some $n\leq\tilde p<\fz$. Then, by Lemma \ref{le:exten}, it is a $W^{1,\tilde p}$-extension domain. Finally, by Lemma \ref{le:capin}, it satisfies the inequality (\ref{eq:capin}) for $\tilde p$-capacity and is a $\tilde p$-quasiextremal distance domain. By the result in \cite{K:1990} from Koskela, $\boz$ is quasiconvex. 
\end{proof}

\section{Proof of Theorem \ref{t1.1}}
\subsection{From uniform condition to local Sobolev-Poincar\'e imbedding}
In this section, we will prove the sufficient part of Theorem \ref{t1.1}. We will show that a uniform domain is also a local Sobolev-Poincar\'e imbedding domain of order $p$ for all $1\leq p\leq n$. 

\begin{thm}
Let $\boz\subset\rn$ be a uniform domain. Then it is also a local Sobolev-Poincar\'e imbedding domain of order $p$ for every $1\leq p\leq n$.
\end{thm}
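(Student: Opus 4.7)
The plan is to reduce each local Sobolev--Poincar\'e inequality to its global counterpart on a well-chosen John subdomain, using the sandwich produced by Lemma~\ref{l2.9}. Fix $x_0 \in \boz$ and $0 < r < \diam(\boz)$; Lemma~\ref{l2.9} supplies a constant $\lambda > 1$ and a $c_0$-John domain $\boz_r \subset \boz$ with
\[ B(x_0, r) \cap \boz \subset \boz_r \subset B(x_0, \lambda r) \cap \boz, \]
where $c_0$ and $\lambda$ depend only on the uniformity constant $\epsilon_0$ of $\boz$. Since $\boz_r$ is a bounded John domain, I can apply Bojarski's global Sobolev--Poincar\'e inequality on $\boz_r$ for $1 \leq p < n$; and since John domains are $0$-carrot, I can apply the Trudinger inequality of Smith--Stegenga on $\boz_r$ for $p = n$. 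In both cases, the inequalities are scale-invariant, so the constants depend only on $c_0$, $n$ and $p$.

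For $1 \leq p < n$, the strategy is straightforward: I would choose $c^\ast \in \rr$ nearly attaining the infimum in the global inequality on $\boz_r$ and obtain
\[ \lf(\int_{\boz_r} |u(y) - c^\ast|^{\frac{np}{n-p}} dy\r)^{\frac{n-p}{np}} \leq C \lf(\int_{\boz_r} |\nabla u(y)|^p dy\r)^{\frac{1}{p}}. \]
Replacing the integration domain on the left by the smaller set $B(x_0, r) \cap \boz$ only decreases the left side, while replacing the integration domain on the right by the larger set $B(x_0, \lambda r) \cap \boz$ only increases the right side; taking the infimum over $c$ then yields \eqref{e1.w1}.

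For $p = n$, I would take $c^\ast$ to near-minimize the Trudinger integral on $\boz_r$, which gives
\[ \int_{\boz_r} \exp\lf(\frac{A|u(y) - c^\ast|}{\|\nabla u\|_{L^n(\boz_r)}}\r)^{\frac{n}{n-1}} dy \leq C|\boz_r| \]
with $A, C$ depending only on $c_0$ and $n$. Since $\|\nabla u\|_{L^n(\boz_r)} \leq \|\nabla u\|_{L^n(B(x_0, \lambda r) \cap \boz)}$, replacing the denominator inside the exponential by the larger norm shrinks the integrand; restricting the integration domain from $\boz_r$ to the smaller $B(x_0, r) \cap \boz$ shrinks the integral further. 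Combined with $|\boz_r| \leq \lambda^n |B(x_0, r)|$, this gives \eqref{e1.w2} with the same $A$ and a new constant $C\lambda^n$.

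The only point requiring genuine care is that all constants are independent of $x_0$ and $r$. This follows from two facts: the John constant produced by Lemma~\ref{l2.9} depends only on $\epsilon_0$, and the global Sobolev--Poincar\'e and Trudinger constants on bounded John domains are scale-invariant, depending only on the John constant and the dimension. Granting these, the proof is a clean two-sided sandwich and I anticipate no deeper obstacle.
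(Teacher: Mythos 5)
Your proposal follows the same route as the paper: apply Lemma~\ref{l2.9} to sandwich a $c_0$-John domain $\boz_r$ between $B(x_0,r)\cap\boz$ and $B(x_0,\lambda r)\cap\boz$, invoke Bojarski's global Sobolev--Poincar\'e inequality on $\boz_r$ for $1\leq p<n$ and the Smith--Stegenga Trudinger inequality on the $0$-carrot domain $\boz_r$ for $p=n$, then use the set inclusions and $|\boz_r|\leq\lambda^n|B(x_0,r)|$ to pass to the local form. Your remarks on scale invariance and on $c_0,\lambda$ depending only on $\epsilon_0$ are exactly the uniformity check the paper relies on; the argument is correct and essentially identical.
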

\begin{proof} 
Let $\Omega$ be a $\epsilon_0$-uniform domain for $0<\epsilon_0\leq 1$. Fix $x_0\in\boz$ and $0<r<\diam(\boz)$. By Lemma \ref{l2.9}, there always exists a $c_0$-John domain $\Omega_r \subset \Omega$ with $c_0$ only depends on $\epsilon_0$ and a constant $\lambda>1$ such that
$$B(x_0,r) \cap \Omega \subset \Omega_r \subset B(x_0, \lambda r) \cap \Omega.$$ 
Depending on if $p$ equals to $n$ or not, we divide the argument below into two cases.

\noindent\underline{\it Case ($ 1\leq p < n$):}
Let $u\in{W}^{1,p}(\Omega)$ be arbitrary. Since $B(x_0, r) \cap \Omega \subset \Omega_r$, we have
\begin{align*}
 \inf \limits _{c \in\mathbb{R}} \left( \int_{B(x_0, r) \cap \Omega} |u(x)- c |^{\frac{np}{n-p}}\, dx \right)^{\frac{n-p}{np}}
&\leq \inf \limits _{c \in\mathbb{R}} \left(\int_{\Omega_r}|u(x)-c|^{\frac{np}{n-p}}\,dx\right)^{\frac{n-p}{np}}.
\end{align*}
Since $\boz_r$ is a $c_0$-John domain with some constant $c_0$ only depends on $\epsilon_0$, by the result in \cite{b89} by Bojarski, there exists $C \geq 1$ independent of $u, x_0$ and $r$ such that
\begin{align*}
\inf \limits _{c \in\mathbb{R}} \left(\int_{\Omega_r}|u(x)-c|^{\frac{np}{n-p}}\,dx\right)^{\frac{n-p}{np}}
\leq C \left(\int_{\Omega_r} |\nabla u(x)|^{p}\,dx\right)^{\frac{1}{p}}
\leq C \left( \int_{B(x_0, \lambda r) \cap \Omega} | \nabla u(x) |^{p} \,dx\right)^{\frac{1}{p}}.
\end{align*}
Hence we have
\begin{align*}
\inf \limits _{c \in\mathbb{R}} \left( \int_{B(x_0, r) \cap \Omega} |u(x)- c |^{\frac{np}{n-p}}\, dx \right)^{\frac{n-p}{np}}
 \leq C \left( \int_{B(x_0, \lambda r) \cap \Omega} | \nabla u(x) |^{p} \,dx\right)^{\frac{1}{p}}.
\end{align*}
It implies $\boz$ is a local Sobolev-Poincar\'e imbedding domain of order $p$ for every $1\leq p<n$.

\noindent\underline{\it Case ($p=n$):} Let $u\in{W}^{1,n}(\Omega)$ be arbitrary. Since
$$B(x_0, r) \cap \Omega \subset \Omega_r\subset B(x_0, \lambda r)\cap\boz,$$
we have
\begin{align*}
\inf\limits_{c\in\mathbb R} \int_{B(x_0,r)\cap\Omega} \exp\left(\frac{A|u(x)-c|}{\|\nabla u\|_{L^{n}(B(x, \lambda r)\cap\Omega)}}\right)^{\frac{n}{n-1}}\,dx
&\leq\inf\limits_{c\in\mathbb R} \int_{\Omega_r} \exp\left(\frac{A|u(x)-c|}{\|\nabla u\|_{L^{n}(\Omega_{r})}}\right)^{\frac{n}{n-1}}\,dx.
\end{align*}
By \cite[Lemma 3.11]{gm85a}, we know $\Omega_r $ is also a $0$-carrot domain. By the result dues to Smith and Stegenga \cite{ss91}, we have
\begin{align*}
\inf \limits_{c\in\mathbb{R}} \int_{\Omega_r} \exp\left(\frac{A|u(x)-c|}{\|\nabla u\|_{L^{n}(\Omega_{r})}}\right)^{\frac{n}{n-1}}\,dx  \leq C|\Omega_r|
\end{align*}
for a constant $C$ independent of $x_0, r ,u$. Hence, we have
\begin{align*}
\inf \limits_{c\in\mathbb{R}} \int_{B(x_0,r)\cap\Omega} \exp \left(\frac{A|u(x)-c|}{\|\nabla u\|_{L^{n}(B(x_0, \lambda r)\cap\Omega)}}\right)^{\frac{n}{n-1}}\,dx
&\leq C |\Omega_r| \leq C |B(x_0, \lambda r)\cap\Omega|\\
& \leq C |B(x_0, \lambda r)| \leq C |B(x_0, r)|.
\end{align*}
It implies the uniform domain $\boz$ is also a local Sobolev-Poincar\'e imbedding domain of order $n$.
\end{proof}

\subsection{From local Sobolev-Poincar\'e imbedding to $(LLC)$}
In this section, we prove the necessary part of Theorem \ref{t1.1}. We will show that under the assumption that the domain satisfies the slice condition, a local Sobolev-Poincar\'e imbedding domain  of order $p$ for $1\leq p\leq n$ is also a uniform domain. By Lemma \ref{le:unisli}, it suffices to prove that it is quasiconvex and $LLC$. The Lemma \ref{le:quasiconvex} already shows it is quasiconvex. By the definition, the fact that a domain is quasiconvex implies that this domain is also $LLC(1)$. Hence, we only need to show it is $LLC(2)$. According to $p$ equals to $n$ or not, we divide the argument into two separated cases.

First, let us consider the case that $1\leq p<n$.
\begin{thm}\label{t3.3}
 Let $u\in W ^{1, p}(\Omega )$ be arbitrary for $1\leq p<n$. If there exist constants $C, \lambda>1$ such that the inequality
\begin{eqnarray}\label{e3.2}
\inf \limits _{c\in\mathbb R} \left( \int_{B(x_0, r)\cap \Omega} |u(x)- c |^{\frac{np}{n-p}}\, dx \right)^{\frac{n-p}{np}}
 \leq C \left( \int_{B(x_0, \lambda r)\cap \Omega} | \nabla u (x)|^{p}\,dx \right)^{\frac{1}{p}}
\end{eqnarray}
holds for every $x_0 \in \Omega$ and $0<r < \fz$, then $\Omega $ is $LLC$.
\end{thm}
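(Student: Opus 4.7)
The plan is to argue by contradiction, combining intrinsic Ahlfors $n$-regularity (Lemma \ref{l2.3}) with a $p$-capacitary test function to convert the failure of $LLC(2)$ into a violation of \eqref{e3.2}. Assume $\boz$ fails $LLC(2)$; then for each $k \in \mathbb N$ there exist $z_k \in \rn$, $r_k > 0$, and $x_k, y_k \in \boz \setminus B(z_k, r_k)$ such that no locally rectifiable curve in $\boz \setminus B(z_k, r_k/k)$ joins $x_k$ to $y_k$. I would first normalize by replacing $r_k$ with $r_k/2$ (keeping the absolute bad radius $r_k/k$ unchanged, so the relative separation merely doubles to $2/k$, still tending to $0$) and then rescaling so that $r_k = 1$; this guarantees $|x_k - z_k|, |y_k - z_k| \geq 2$, while the new relative separation $\epsilon_k$ still tends to $0$. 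Writing $A_k$ for the connected component of $\boz \setminus \overline{B(z_k, \epsilon_k)}$ containing $x_k$, the non-connectability forces $y_k \notin A_k$.

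Next I would construct a test function exploiting the vanishing capacity of the bottleneck. For $1 < p < n$, set $\alpha = (p-n)/(p-1) < 0$ and define the radial $p$-capacitary profile
\[ \psi_k(t) = \begin{cases} 0, & 0 \leq t \leq \epsilon_k, \\ \dfrac{t^\alpha - \epsilon_k^\alpha}{1 - \epsilon_k^\alpha}, & \epsilon_k \leq t \leq 1, \\ 1, & t \geq 1; \end{cases} \]
for $p = 1$, replace $\psi_k$ by a simple Lipschitz cutoff vanishing on $[0, \epsilon_k]$ and equal to $1$ on $[2\epsilon_k, \fz)$. Then set
\[ u_k(w) = \begin{cases} \psi_k(|w - z_k|), & w \in A_k, \\ 0, & w \in \boz \setminus A_k. \end{cases} \]
Continuity across $\partial A_k \cap \boz \subset \partial B(z_k, \epsilon_k)$ follows because $\psi_k$ vanishes there, so $u_k$ is Lipschitz and belongs to $W^{1,p}(\boz)$. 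The standard annular $p$-capacity computation yields
\[ \int_\boz |\nabla u_k|^p \leq \mathrm{Cap}_p\bigl(\overline{B(z_k,\epsilon_k)}, \rn \setminus B(z_k,1)\bigr) \leq C(n,p)\,\epsilon_k^{n-p} \longrightarrow 0 \text{ as } k \to \fz. \]

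Finally I would plug $u_k$ into \eqref{e3.2} with $x_0 = x_k \in \boz$ and radius $R_k = |x_k - y_k| + 1$, so that $B(x_k, R_k) \cap \boz$ contains both $x_k$ and $y_k$; the right-hand side is then dominated by $C \epsilon_k^{(n-p)/p} \to 0$. For the left-hand side, since intrinsic distance dominates Euclidean distance and $|x_k - z_k|, |y_k - z_k| \geq 2$, the intrinsic balls $B_\boz(x_k, 1/4)$ and $B_\boz(y_k, 1/4)$ lie entirely in $A_k \cap \{|w-z_k| > 1\}$ and $(\boz \setminus A_k) \cap \{|w-z_k| > 1\}$ respectively, where $u_k \equiv 1$ and $u_k \equiv 0$. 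Lemma \ref{l2.3} bounds each of their Lebesgue measures below by a positive constant independent of $k$. For every $c \in \mathbb R$, at least one of $|u_k - c| \geq 1/2$ must hold on $B_\boz(x_k, 1/4)$ or on $B_\boz(y_k, 1/4)$, so the left-hand side of \eqref{e3.2} exceeds a positive $c_1$ independent of $k$, yielding $0 < c_1 \leq C \epsilon_k^{(n-p)/p} \to 0$, the desired contradiction.

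The main obstacle I anticipate is the preparatory normalization step: the witnesses $x_k, y_k$ produced by negating $LLC(2)$ could a priori sit on $\partial B(z_k, r_k)$, leaving no room for intrinsic balls around them to avoid the transition annulus where $u_k$ is neither $0$ nor $1$. Arranging $|x_k - z_k|, |y_k - z_k| \geq 2 r_k$ while maintaining the shrinking separation ratio, and using intrinsic rather than Euclidean balls in the lower bound so that connectivity into $A_k$ is automatic, are the two places where I would spend the most care.
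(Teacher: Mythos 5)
Your proposal is correct in essence and takes a recognizably different route from the paper. Both arguments exploit the same core mechanism -- a bottleneck of vanishing capacity contradicts the local Sobolev--Poincar\'e inequality once intrinsic Ahlfors $n$-regularity (Lemma~\ref{l2.3}) supplies a uniform lower bound on the left-hand side -- but the test functions and the bookkeeping differ. The paper keeps the outer radius $r$ and the ratio $b$ as two free parameters and uses the logarithmic cutoff
$u(x)=\frac{1}{\log\frac{1}{2b}}\log\frac{|x-x_0|}{br}$, for which $\|\nabla u\|_{L^p}\le C r^{(n-p)/p}/\log\frac{1}{2b}$; the powers of $r$ cancel against the lower bound and one reads off a lower bound on $b$ directly, with no rescaling. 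You instead rescale so the outer radius is $1$, pass to a shrinking inner radius $\epsilon_k$, and use the extremal power-law profile $t\mapsto(t^{\alpha}-\epsilon_k^{\alpha})/(1-\epsilon_k^{\alpha})$, whose Dirichlet $p$-energy is the annular $p$-capacity $\lesssim\epsilon_k^{n-p}\to 0$; the contradiction is then a ``$0<c_1\le C\epsilon_k^{(n-p)/p}\to 0$'' sequence. Either cutoff works; the logarithmic one is marginally cleaner because it is $p$-independent over the whole range $1\le p<n$ and cancels scales, while yours requires a separate Lipschitz cutoff for $p=1$.

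Two small points deserve attention. First, you address only $LLC(2)$; Definition~\ref{de:LLC} requires $LLC(1)$ as well. This is harmless -- Lemma~\ref{le:quasiconvex} already shows a local Sobolev--Poincar\'e imbedding domain of order $p$ is quasiconvex, and quasiconvexity gives $LLC(1)$ by definition -- but you should say so explicitly, as the paper does at the start of its proof. Second, the threshold $r_o$ in Lemma~\ref{l2.3} is not scale-invariant: after rescaling by $2/r_k$ it becomes $r_o^{(k)}=(2/r_k)r_o$, and your intrinsic-ball lower bound at radius $1/4$ needs $1/4<r_o^{(k)}$, which can fail when $r_k$ is not small. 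This is repairable (for instance, note $r_k\le 2\operatorname{diam}\Omega$ whenever the annulus $B(z_k,r_k)\setminus B(z_k,r_k/k)$ actually meets $\Omega$, so $r_o^{(k)}\ge r_o/\operatorname{diam}\Omega>0$ uniformly, and then use intrinsic balls of radius $\min\{1/4,\,r_o^{(k)}/2\}$), or one can do what the paper does and first reduce to $r\le r_o$, extending to larger $r$ by the monotonicity of the sets $\Omega\setminus B(x_0,\cdot)$. Either way, this normalization care is a genuine step, not a formality, so you were right to flag it as the place to spend effort.
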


\begin{proof}

As we know, the Lemma \ref{le:quasiconvex} tells us that $\boz$ is quasiconvex. By the definitions, a quasiconvex domain must be $LLC(1)$. So we only need to show that $\boz$ is $LLC(2)$. Without loss of generality, we can only consider the case that $x_1 , x_2 \in \Omega \backslash B (x_0, r)$ for some  $x_0 \in \Omega $ and $0<r<\diam(\boz)$. Furthermore, we can assume that $x_1, x_2 \in \partial B (x_0, r) \cap \Omega $. Assume that $x_1$ and $x_2$ are not in the same component of $\Omega \backslash{B(x_0, br )}$ for $b \in (0,1)$, it suffices to find a uniform lower bound for $b$. We may assume $b<\frac{1}{16}$. Denote by $\Omega_i$ the connected component of $\Omega \backslash{B(x_0, br)}$ which contains $x_i$ for $i=1, 2$, then $\Omega_1 \cap \Omega_2 = \emptyset$. Without loss of generality, we can only consider the case that $0<r\leq r_o$ where $r_o$ comes from the fact that $\boz$ is intrinsic Ahlfors $n$-regular. If we proved that for every $0<r\leq r_o$, there exists $0<a<1$ such that $x_1$ and $x_2$ can be connected by a locally rectifiable curve $\gamma\subset\boz\setminus B(x_0, ar)$. Then for every $r_o<r<\diam(\boz)$, $x_1, x_2$ can be connected by a locally rectifiable curve
$$\gamma\subset\boz\setminus B\lf(x_0, \frac{ar_o}{\diam(\boz)}r\r).$$

Fix $0<r\leq r_o$. Define $F_i$ to be the connected component of $\Omega \cap B(x_i , \frac{r}{2})$ which contains $x_i$ for $i=1, 2$.
Obviously,  we have $F_1, F_2 \subset B (x_0, 2r)$, $F_1 \cap F_2= \emptyset$ and $F_i \cap B(x_0, \frac{r}{2})=\emptyset $ for $i=1,2$. Since $\boz$ is intrinsic Ahlfors $n$-regular, there exists a sufficient small constant $0<c<1$ such that we have
\[|F_1|\geq cr^n\ {\rm and}\ |F_2|\geq cr^n.\]
Define a function $u$ on $\boz$ by setting
\begin{align*}
 u(x)=\left\{
\begin{array}{ll}
 1  ,      &    x \in \Omega_2 \backslash B(x_0, \frac{r}{2}) \\
\frac{1}{\log \frac{1}{2b}}log \frac{|x- x_0|}{br} , \,\,     &    x \in \Omega_2  \cap \left(B(x_0, \frac{r}{2})\backslash B(x_0, br )\right)\\
 0,       &      elsewhere.
\end{array} \right.
\end{align*}
Obviously, this function is well defined. Noting that
\begin{align*}
 |\nabla u (x) | =\frac{1}{\log \frac{1}{2b}}\frac{\chi _{\Omega_2  \cap \left(B(x_0, \frac{r}{2})\backslash B(x_0, br )\right)}(x)}{ |x- x_0 |},
 \end{align*}
we have
\begin{eqnarray}\label{eq:upper1}
\left( \int_{B(x_0, \lambda r) \cap \Omega} | \nabla u(x)|^{p}dx \right)^{\frac{1}{p}}
 && \leq \frac{1}{\log \frac{1}{2b}} \left( \int_{ \Omega_2  \cap \left(B(x_0, \frac{r}{2})\backslash B(x_0, br )\right)} | x- x_0 |^{-p}dx \right)^{\frac{1}{p}}\\
 && \leq \frac{1}{\log \frac{1}{2b}}  r^{\frac{n-p}{p}} \left[\left(\frac{1}{2}\right)^{n-p} - b^{n-p} \right]^{\frac{1}{p}}\nonumber\\
 && \leq \frac{Cr^{\frac{n-p}{p}}}{\log\frac{1}{2b}}.\nonumber
\end{eqnarray}
For each $c \in \rr$, $|u- c| \geq \frac{1}{2}$ holds either on $\Omega_2\cap\lf(B(x_0, r)\setminus B\lf(x_0, \frac{r}{2}\r)\r)$ or on $\Omega_1\cap\lf(B(x_0, r)\setminus B\lf(x_0, \frac{r}{2}\r)\r)$. Since $\boz$ is intrinsic Ahlfors $n$-regular, we have both
\[\lf|\boz_i\cap\lf(B(x_0, r)\setminus B\lf(x_0, \frac{r}{2}\r)\r)\r|\geq cr^n\]
for $i=1, 2$ and a sufficient small constant $0<c<1$. Hence, we have
\begin{eqnarray}\label{eq:upper2}
\inf \limits _{c\in\mathbb R} \left( \int_{B(x_0, r) \cap \Omega} |u(x)-c|^{\frac{np}{n-p}}\, dx \right)^{\frac{n-p}{np}}\geq cr^{\frac{n-p}{n}},
\end{eqnarray}
for a sufficient small $0<c<1$. By combining inequalities (\ref{e1.w1}), (\ref{eq:upper1}) and (\ref{eq:upper2}), we obtain
\[\frac{1}{\log\frac{1}{2b}}>c\]
for some sufficient small $0<c<1$. It gives a lower bound to $b$ and proves $\boz$ is $LLC(2)$.
\end{proof}

Next, we consider the case that $p=n$.
\begin{thm}\label{t3.4}
 If for every $u\in W ^{1, n}(\Omega )$, there exist constants $C, \lambda>1$  the following inequality
\begin{eqnarray}\label{e3.3}
\inf \limits_{c \in\mathbb{R}} \int_{B(x_0,r)\cap\Omega} \exp\left(\frac{A|u(x)-c|}{\|\nabla u\|_{L^{n}(B(x_0, \lambda r)\cap\Omega)}}\right)^{\frac{n}{n-1}}\,dx
\leq C |B(x_0,r)|
\end{eqnarray}
holds with every $x_0 \in \Omega$ and $0<r < \diam \Omega $, then $\Omega $ is $LLC$.
\end{thm}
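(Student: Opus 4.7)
The plan is to imitate the proof of Theorem \ref{t3.3}, replacing the subcritical Sobolev-Poincar\'e norm by the exponential integral in (\ref{e3.3}). First I would invoke Lemma \ref{le:quasiconvex} with $p=n$ (the lemma is stated for every $1\leq p<\fz$) to conclude that $\boz$ is quasiconvex and therefore satisfies $LLC(1)$ automatically. It remains to verify $LLC(2)$.

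Fix $x_0\in\boz$ and $0<r\leq r_o$ with $r_o$ as in Lemma \ref{l2.3}; the case $r_o<r<\diam(\boz)$ reduces to this one by the rescaling of the forbidden radius already used at the start of the proof of Theorem \ref{t3.3}. Take $x_1,x_2\in\partial B(x_0,r)\cap\boz$ and suppose, for contradiction, that they lie in distinct connected components $\boz_1,\boz_2$ of $\boz\setminus B(x_0,br)$ for some $b\in(0,\tfrac{1}{16})$. I would test (\ref{e3.3}) with the exact logarithmic cutoff $u$ introduced in Theorem \ref{t3.3}, namely $u\equiv 1$ on $\boz_2\setminus B(x_0,r/2)$, $u\equiv 0$ outside $\boz_2\cap B(x_0,r/2)$, and $u(x)=\frac{1}{\log(1/2b)}\log\frac{|x-x_0|}{br}$ on the annular transition region.

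The two key estimates are then: (i) a polar-coordinate integration of $|\nabla u|^n\leq (\log(1/2b))^{-n}|x-x_0|^{-n}$ over the annulus $br<|x-x_0|<r/2$ yields
\[
\|\nabla u\|_{L^n(B(x_0,\lambda r)\cap\boz)}\leq \frac{C}{(\log(1/2b))^{(n-1)/n}};
\]
and (ii) exactly as in Theorem \ref{t3.3}, for every $c\in\rr$ we have $|u-c|\geq\tfrac12$ on at least one of the two annular pieces $\boz_i\cap(B(x_0,r)\setminus B(x_0,r/2))$, which by the intrinsic Ahlfors $n$-regularity from Lemma \ref{l2.3} has Lebesgue measure at least $c_0 r^n$. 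Consequently
\[
\inf_{c\in\rr}\int_{B(x_0,r)\cap\boz}\exp\left(\frac{A|u(x)-c|}{\|\nabla u\|_{L^n(B(x_0,\lambda r)\cap\boz)}}\right)^{\frac{n}{n-1}}dx\geq c_0 r^n\exp\left(\frac{A}{2\|\nabla u\|_{L^n}}\right)^{\frac{n}{n-1}}.
\]
Combining this with the hypothesis (\ref{e3.3}) and $|B(x_0,r)|\leq C r^n$ forces $\exp(A/(2\|\nabla u\|_{L^n}))^{n/(n-1)}\leq C'$, hence $\|\nabla u\|_{L^n(B(x_0,\lambda r)\cap\boz)}\geq c_1>0$. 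Plugging in the gradient bound from (i) yields $\log(1/2b)\leq C''$, giving a uniform positive lower bound for $b$ that depends only on $n,A,C,\lambda$ and the regularity constants.

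The main obstacle is conceptual rather than computational: one has to recognize that the exponential left-hand side in (\ref{e3.3}) plays exactly the role of the $L^{np/(n-p)}$ norm in Theorem \ref{t3.3}, because a region of fixed positive proportion on which $|u-c|\geq\tfrac12$ contributes $\sim r^n\exp(c/\|\nabla u\|_{L^n})^{n/(n-1)}$, and the exponential blow-up as $\|\nabla u\|_{L^n}\to 0$ (equivalently $b\to 0$) is exactly the mechanism that prevents $B(x_0,br)$ from pinching $\boz$ into disconnected pieces. Secondary care is needed to verify that Lemma \ref{le:quasiconvex} applies under (\ref{e3.3}) alone (which it does, since that lemma covers all $1\leq p<\fz$, including $p=n$), and that $u\in W^{1,n}(\boz)$, both of which are routine.
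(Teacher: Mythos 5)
Your proposal is correct and follows essentially the same route as the paper: invoke Lemma \ref{le:quasiconvex} for quasiconvexity (hence $LLC(1)$), reuse the logarithmic cutoff from Theorem \ref{t3.3}, bound $\|\nabla u\|_{L^n}\lesssim(\log\tfrac{1}{2b})^{-(n-1)/n}$ by polar integration, and use the measure-density lower bound together with the ``$|u-c|\geq\tfrac12$ on one annular piece'' observation to force $\exp(C\log\tfrac{1}{2b})\lesssim 1$, giving a uniform lower bound on $b$. The paper states the last step a bit more compactly (combining the two inequalities into ``$1>c\exp(C\log\tfrac{1}{2b})$'' directly), but the underlying computation is identical to yours.
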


\begin{proof}

As we know,  the Lemma \ref{le:quasiconvex} implies that $\boz$ is quasiconvex. By the definitions, a quasiconvex domain must be $LLC(1)$. So we only need to show that $\boz$ is $LLC(2)$ below. All argument is almost as same as the argument to show that $\boz$ is $LLC(2)$ in the proof of Theorem \ref{t3.3}. The difference is that we need to replace the inequality (\ref{e1.w1}) by the inequality (\ref{e1.w2}) and to estimate the $n$-Dirichlet energy of the test function $u$. Some simple computation gives
\begin{eqnarray*}
\left( \int_{B(x_0, \lambda r) \cap \Omega} | \nabla u(x)|^{n}dx \right)^{\frac{1}{n}}
 && \leq \frac{C}{\log \frac{1}{2b} }\left( \int_{ \Omega_2  \cap \left(B(x_0, \frac{r}{2})\backslash B(x_0, br )\right)} | x- x_0 |^{n}dx \right)^{\frac{1}{n}}\\
 && \leq C \left(\log \frac{1}{2b} \right)^{\frac{1-n}{n}}
\end{eqnarray*}
and
\begin{eqnarray*}
\int_{B(x_0, r) \cap \Omega} \exp\left(\frac{A|u(x)-c|}{\|\nabla u\|_{L^{n}(B(x_0, \lambda r)\cap \tilde{\Omega}_{1})}}\right)^{\frac{n}{n-1}}\,dx
\geq c\exp\lf(C\log\frac{1}{2b}\r)r^n
\end{eqnarray*}
for some sufficient small $0<c<1$. Then the inequality (\ref{e3.3}) gives
\[1>c\exp\lf(C\log\frac{1}{2b}\r).\]
It implies a uniform lower bound to $b$ and $\boz$ is $LLC(2)$.
\end{proof}

\section{Proof of Theorem \ref{th:2}}
In this section, we prove Theorem \ref{th:2}.
\begin{proof}[Proof of Theorem \ref{th:2}]
First ,we show a local Sobolev-Poincar\'e imbedding domain of order $p$ for $n<p<\fz$ with slice condition is also an $\alpha$-cigar domain for $\alpha=(p-n)/(p-1)$. For arbitrary $x_1, x_2\in\boz$, we can find $x\in\boz$ and $0,r<\diam(\boz)$ with $x_1, x_2\in B(x, r)\cap\boz$. Then, we have
\begin{eqnarray*}
|u(x_1)-u(x_2)|&\leq& C|x_1-x_2|^{1-\frac{n}{p}}\lf(\int_{B(x, \lambda r)\cap\boz}|\nabla u(y)|^pdy\r)^{\frac{1}{p}}\\
&\leq& C|x_1-x_2|^{1-\frac{n}{p}}\lf(\int_{\boz}|\nabla u(y)|^pdy\r)^{\frac{1}{p}}.
\end{eqnarray*}
So $\boz$ is also a global Sobolev-Poincar\'e imbedding domain of order $p$. By the result in \cite{bsk96}, $\boz$ is an $\alpha$-cigar domain for $\alpha=(p-n)/(p-1)$.

Next, we show an $\alpha$-cigar domain $\boz$ with $\alpha=(p-n)/(p-1)$ is a local Sobolev-Poincar\'e imbedding domain of order $p$ for $n<p<\fz$. By \cite[Lemma 2.2]{bsk96}, for every pair of points $x_1, x_2\in\boz$, there exists curve $\gamma\subset\boz$ joining $x_1$ and $x_2$ with
\[l(\gamma)\leq C|x_1-x_2|\]
for some uniform constant $C>1$, and
\[\int_{\gamma}\dist(z, \partial\boz)^{\alpha-1}|dz|\leq C|x_1-x_2|^\alpha.\]
It implies that there exists a large enough $C>1$ such that for every $x\in\boz$ and $0<r<\diam(\boz)$, $B(x, Cr)\cap\boz$ contains a connected component, which contains $B(x, r)\cap\boz$, is an $\alpha$-cigar domain with $\alpha=(p-n)/(p-1)$. Hence, for every $x_1, x_2\in B(x, r)\cap\boz$, we have
\[|u(x_1)-u(x_2)|\leq C|x_1-x_2|^{1-\frac{n}{p}}\lf(\int_{B(x, \lambda r)\cap\boz}|\nabla u(y)|^pdy\r)^{\frac{1}{p}}.\]
It implies $\boz$ is a local Sobolev-Poincar\'e imbedding domain of order $p$.
 \end{proof}


\medskip


\end{document}